\documentclass[11pt]{article}

\usepackage[utf8]{inputenc}
\usepackage[T1]{fontenc}
\usepackage[margin=1in]{geometry}
\usepackage{mathptmx}

\usepackage{amsmath}
\usepackage{amssymb}
\usepackage{amsthm}
\usepackage{amsfonts}
\usepackage{mathtools}
\usepackage{mathrsfs}

\usepackage{booktabs}
\usepackage{array}
\usepackage{graphicx}

\usepackage{tikz}
\usetikzlibrary{arrows.meta,positioning,decorations.pathmorphing,decorations.pathreplacing,calc,fit,backgrounds,patterns}

\usepackage{xcolor}
\definecolor{orbitA}{HTML}{B85042}
\definecolor{orbitB}{HTML}{2F3C7E}
\definecolor{orbitC}{HTML}{5B8C5A}
\definecolor{baseworld}{HTML}{555555}
\definecolor{dashborder}{HTML}{B85042}

\usepackage{enumitem}

\usepackage{hyperref}
\usepackage{cleveref}
\usepackage{url}

\hypersetup{
    colorlinks=true,
    linkcolor=blue,
    citecolor=blue,
    urlcolor=blue,
    pdftitle={Modal Exchangeability: Centered Symmetry and the Credal Architecture of Kripke Frames},
    pdfauthor={Daniel Zantedeschi}
}

\crefname{theorem}{Theorem}{Theorems}
\crefname{lemma}{Lemma}{Lemmas}
\crefname{proposition}{Proposition}{Propositions}
\crefname{corollary}{Corollary}{Corollaries}
\crefname{definition}{Definition}{Definitions}
\crefname{example}{Example}{Examples}
\crefname{remark}{Remark}{Remarks}
\crefname{section}{Section}{Sections}
\crefname{figure}{Figure}{Figures}

\theoremstyle{plain}
\newtheorem{theorem}{Theorem}[section]
\newtheorem{lemma}[theorem]{Lemma}
\newtheorem{proposition}[theorem]{Proposition}
\newtheorem{corollary}[theorem]{Corollary}

\theoremstyle{definition}
\newtheorem{definition}[theorem]{Definition}
\newtheorem{example}[theorem]{Example}

\theoremstyle{remark}
\newtheorem*{remark}{Remark}

\DeclareMathOperator{\Aut}{Aut}
\DeclareMathOperator{\Stab}{Stab}
\DeclareMathOperator{\Sym}{Sym}

\newcommand{\Law}{\mathcal{L}}

\newcommand{\N}{\mathbb{N}}
\newcommand{\Z}{\mathbb{Z}}

\newcommand{\Acc}{\mathrm{Acc}}
\newcommand{\Orbit}{\mathcal{O}}
\newcommand{\FG}{\mathcal{F}^G}

\usepackage{setspace}
\usepackage[authoryear,round]{natbib}

\title{Modal Exchangeability:\\ Centered Symmetry and the Credal Architecture of Kripke Frames}
\author{Daniel Zantedeschi\thanks{Email: danielz@usf.edu. School of Information Systems, Muma College of Business, University of South Florida, Tampa, FL 33620, USA.}}
\date{\today}

\begin{document}

\maketitle

\begin{abstract}
De Finetti's representation theorem assumes a flat index set. We ask what happens when the index set carries modal structure, with possibilities organized into a Kripke frame. We define \emph{modal exchangeability} as invariance under accessibility-preserving automorphisms that fix a designated base world, and derive a representation theorem for countable frames. The orbit decomposition of the centered symmetry group governs the within-orbit structure: worlds in the same orbit are conditionally identically distributed, and on orbits satisfying a richness condition and countable infinitude they are conditionally i.i.d.\ given a rigid orbit-specific directing measure. Point-homogeneous $\mathsf{S5}$ frames yield a single de Finetti parameter; $\mathsf{S4}$ frames may admit multiple orbits, with the richer orbits carrying rigid directing measures and the remainder carrying only weaker invariant structure. Two applications follow. First, the orbit decomposition determines whether learning pools globally or remains orbit-local. Second, it supplies a mechanism for structural credal fine-graining indexed to orbit regions, distinct from hyperintensionality in the strict sense of distinguishing coextensive propositions.
\end{abstract}

\medskip
\noindent\textbf{Keywords:} modal exchangeability, de Finetti theorem, Kripke frames, formal epistemology, probabilistic symmetry, orbit structure

\clearpage

\section{Introduction}
\label{sec:intro}

\subsection{The problem}

Rational agents form beliefs about structured possibility spaces. In many epistemically significant settings, propositions are not evaluated against a single flat domain but relative to possible worlds connected by an accessibility relation: knowledge, time, obligation, and metaphysical possibility all impose structure on the space over which uncertainty is defined. The question that motivates this paper is: what form must rational credence take when the space of epistemic possibilities carries modal structure?

De Finetti's theorem~\citep{deFinetti1937} provides the classical answer for \emph{unstructured} spaces: if an agent regards a sequence of observations as exchangeable (invariant under finite permutations), the joint distribution must be a mixture of i.i.d.\ processes. The theorem is powerful precisely because it derives rich probabilistic structure from a single symmetry assumption. But it presupposes that the underlying index set is \emph{flat}---all indices are interchangeable, and the space carries no structure beyond cardinality.

This is inadequate when the space of epistemic possibilities is modal. When propositions are evaluated relative to possible worlds connected by an accessibility relation, the relevant symmetries are not unrestricted permutations but structure-preserving transformations. This paper asks what happens to de Finetti's theorem when the index set is a Kripke frame~\citep{vanBenthem2010,vanBenthem2011}---when symmetry must respect accessibility structure and is centered on a designated base world. The question is therefore not only which worlds are possible, but what structure rational credence should take when possibility itself is organized by accessibility.

\subsection{Centered symmetry and modal exchangeability}

A Kripke frame $\langle W, R \rangle$ carries a natural symmetry group: its automorphisms, the bijections of $W$ that preserve $R$. But not all automorphisms are epistemically relevant. An agent situated at a base world $w_0$ occupies a privileged standpoint; the symmetries that matter are those that fix $w_0$ while permuting accessible worlds. These form the \emph{centered symmetry group} $G = \Stab_{w_0}$. The governing normative principle can be stated informally before the group theory arrives:

\medskip
\noindent\textbf{Centered Structural Indifference.} \emph{If two accessible worlds are related by an automorphism that preserves the frame and fixes the agent's standpoint, then---absent defeaters---a rational prior should not discriminate between them.}
\medskip

\noindent \emph{Modal exchangeability} makes this precise: the agent's probability measure over valuations is required to be $G$-invariant. Where de Finetti requires invariance under all permutations of an index set, modal exchangeability requires invariance under those permutations that respect accessibility.

\subsection{The main result and its interpretation}

A concrete example will anchor the discussion. Consider an $\mathsf{S4}$ frame with worlds $\{w_0\} \cup \{a_i : i \in \N\} \cup \{b_i : i \in \Z\}$: $w_0$ accesses every other world; the $a$-region is a complete reflexive graph; the $b$-region is a reflexive linear order indexed by $\Z$; no cross edges connect the two regions. The two regions are not isomorphic as relational structures, so no automorphism swaps them, and the centered symmetry group decomposes $\Acc(w_0)$ into two orbits, $O_a = \{a_i\}$ and $O_b = \{b_i\}$. The $a$-orbit satisfies a richness condition and supports a rigid directing measure in the classical de Finetti style; the $b$-orbit, whose automorphisms are the translations of $\Z$, supports only identical conditional marginals and a shift-stationary representation. One paper-wide contrast runs through the argument: the $a$-region gives de Finetti; the $b$-region gives transitive-but-not-exchangeable stationarity. This two-region frame (developed formally as Example~\ref{ex:S4}) will serve as a running illustration.

The general representation theorem (Theorem~\ref{thm:general}) shows that the orbit structure of $G$ acting on $\Acc(w_0)$ governs the within-orbit probabilistic decomposition: worlds in the same orbit are conditionally identically distributed; under a richness condition and countable infinitude, they are conditionally i.i.d.\ given an orbit-specific directing measure that is \emph{orbit-rigid} (shortened to ``rigid'' throughout, distinguished from Kripkean rigid designation in \S\ref{subsec:rigidity}). The main philosophical suggestion: modal structure constrains admissible credal architectures via symmetry.

Under point-homogeneous $\mathsf{S5}$ (where the centered symmetry group acts transitively on $\Acc(w_0)$), one recovers a single de Finetti parameter. Under $\mathsf{S4}$, the non-base accessible set may fragment---as in the running example---into multiple orbits with \emph{local} de Finetti representations and no symmetry-forced pooling across them. Weaker systems ($\mathsf{K}$, $\mathsf{T}$) do not determine a canonical architecture; the form is frame-relative. Two philosophical applications follow. First, the S4/S5 distinction controls whether learning pools globally or only orbit-locally. Second, the orbit decomposition provides a mechanism for structural credal fine-graining indexed to orbit regions, distinct from hyperintensionality in the strict sense of distinguishing coextensive propositions.

\subsection{Positioning}

The paper continues the line of work begun by \citet{Diaconis1977} and \citet{SuppesZanotti1981} on symmetry and probabilistic representation. Diaconis showed that exchangeability has nontrivial representational consequences; Suppes and Zanotti asked when probabilistic explanations are possible. The present contribution addresses a structural prior question: when does modal structure license a single global directing parameter, and when does it yield only orbit-local parameters? The proof machinery (disintegration, Hewitt--Savage) is classical; the novelty lies in identifying which symmetry group is epistemically relevant on a centered Kripke frame and tracing the logic-sensitive consequences.

The framework should not be confused with logical pluralism in the sense of \citet{Bueno2009}. It does not quantify over alternative logics or claim that different modal systems correspond to different ``true'' logics. Unlike approaches that analyze logical consequence by quantifying over cases, the present framework works with the symmetries of a fixed modal structure: the relevant constraint is invariance under accessibility-preserving transformations, not unrestricted variation over candidate cases. The S4/S5 distinction enters because it governs which orbit structures are possible, not because it adjudicates between competing logics. The result is a conditional representation theorem, not a metalogical thesis.

The contribution sits within probabilistic symmetry theory. Its specific aim is to identify the epistemically relevant symmetry group on a centered Kripke frame and to trace the representational consequences of invariance under that group. The orbit-rigidity results support structural credal fine-graining indexed to orbit regions, generated by the symmetry geometry of the frame rather than by enrichment of the semantic objects.

Modal structure does not merely constrain possibility; it constrains the architecture of rational uncertainty.

\section{Modal Frames, Automorphisms, and Modal Exchangeability}
\label{sec:framework}

This section introduces the formal apparatus: frames, automorphisms, stabilizers, orbits, and the modal exchangeability condition.

\subsection{Modal frames and accessible clusters}

\begin{definition}[Modal frame]
A \emph{modal frame} is a pair $\langle W, R \rangle$ where $W$ is a non-empty set of \emph{worlds} and $R \subseteq W \times W$ is an \emph{accessibility relation}.
\end{definition}

No conditions are imposed on $R$ at this stage. Specific modal logics correspond to specific constraints: $\mathsf{K}$ imposes none, $\mathsf{T}$ requires reflexivity, $\mathsf{S4}$ requires reflexivity and transitivity (a preorder), and $\mathsf{S5}$ requires reflexivity, symmetry, and transitivity (an equivalence relation); see \cref{fig:frames} for representative examples. Throughout, we assume $W$ is countable, ensuring that the space of valuations is a standard Borel space and enabling measure-theoretic arguments without strong choice principles.

\begin{definition}[Non-base accessible worlds]
For a designated base world $w_0 \in W$, the set of \emph{non-base accessible worlds} is $\Acc(w_0) := \{w \in W : w \neq w_0 \text{ and } R(w_0, w)\}$.
\end{definition}

The base world $w_0$ is excluded because the stabilizer $G = \Stab_{w_0}$ fixes $w_0$; including $w_0$ in the domain of the $G$-action would prevent transitivity claims from ever holding. In an $\mathsf{S5}$ frame, $\Acc(w_0)$ consists of the non-base members of $w_0$'s equivalence class. In an $\mathsf{S4}$ frame, it is the non-base portion of the upward-closed set above $w_0$. In a $\mathsf{K}$ frame, it may have no special structural properties.

\begin{figure}[ht]
\centering
\resizebox{\textwidth}{!}{%
\begin{tikzpicture}[
    world/.style={circle, draw=black!70, thick, fill=orbitA!70, text=white, minimum size=9mm, inner sep=0pt, font=\footnotesize\bfseries},
    worldB/.style={circle, draw=black!70, thick, fill=orbitB!70, text=white, minimum size=9mm, inner sep=0pt, font=\footnotesize\bfseries},
    worldMix/.style={circle, draw=black!70, thick, fill=orbitA!40, minimum size=9mm, inner sep=0pt, font=\footnotesize\bfseries},
    base/.style={circle, draw=baseworld, very thick, fill=baseworld, text=white, minimum size=9.5mm, inner sep=0pt, font=\footnotesize\bfseries},
    arr/.style={-{Stealth[length=5pt,width=4pt]}, thin, black!40},
    darr/.style={{Stealth[length=5pt,width=4pt]}-{Stealth[length=5pt,width=4pt]}, thin, black!40},
    framelabel/.style={font=\bfseries\small},
]

\begin{scope}[shift={(0,0)}]
  \node[framelabel] at (1.4,3.3) {$\mathsf{S5}$: Equivalence};
  \begin{scope}[on background layer]
    \draw[darr] (0,2.5) -- (2.8,2.5);
    \draw[darr] (0,0.3) -- (2.8,0.3);
    \draw[darr] (0,2.5) -- (0,0.3);
    \draw[darr] (2.8,2.5) -- (2.8,0.3);
    \draw[darr] (0,2.5) -- (2.8,0.3);
    \draw[darr] (2.8,2.5) -- (0,0.3);
    \draw[darr] (1.4,1.4) -- (0,2.5);
    \draw[darr] (1.4,1.4) -- (2.8,2.5);
    \draw[darr] (1.4,1.4) -- (0,0.3);
    \draw[darr] (1.4,1.4) -- (2.8,0.3);
  \end{scope}
  \node[base] (s5w0) at (1.4,1.4) {$w_0$};
  \node[world] (s5w1) at (0,2.5) {$w_1$};
  \node[world] (s5w2) at (2.8,2.5) {$w_2$};
  \node[world] (s5w3) at (0,0.3) {$w_3$};
  \node[world] (s5w4) at (2.8,0.3) {$w_4$};
  \node[font=\scriptsize, text=black!60, align=center] at (1.4,-0.6) {fully connected\\single cluster};
\end{scope}

\begin{scope}[shift={(4.8,0)}]
  \node[framelabel] at (1.8,3.3) {$\mathsf{S4}$: Preorder};
  \begin{scope}[on background layer]
    \draw[arr] (0,1.4) -- (1.2,2.5);
    \draw[arr] (0,1.4) -- (3.0,2.5);
    \draw[arr] (0,1.4) -- (1.2,0.3);
    \draw[arr] (0,1.4) -- (3.0,0.3);
    \draw[darr] (1.2,2.5) -- (3.0,2.5);
    \draw[darr] (1.2,0.3) -- (3.0,0.3);
    \draw[arr] (1.2,2.5) -- (1.2,0.3);
    \draw[arr] (3.0,2.5) -- (3.0,0.3);
  \end{scope}
  \node[base] (s4w0) at (0,1.4) {$w_0$};
  \node[world] (s4w1) at (1.2,2.5) {$w_1$};
  \node[world] (s4w2) at (3.0,2.5) {$w_2$};
  \node[worldB] (s4w3) at (1.2,0.3) {$w_3$};
  \node[worldB] (s4w4) at (3.0,0.3) {$w_4$};
  \node[font=\scriptsize, text=black!60, align=center] at (1.5,-0.6) {directed hierarchy\\distinct clusters};
\end{scope}

\begin{scope}[shift={(9.6,0)}]
  \node[framelabel] at (1.4,3.3) {$\mathsf{T}$: Reflexive};
  \begin{scope}[on background layer]
    \draw[arr] (0,1.4) -- (1.4,2.5);
    \draw[arr] (1.4,2.5) -- (2.8,1.4);
    \draw[arr] (2.8,1.4) -- (0,1.4);
    \draw[arr] (0,1.4) -- (1.4,0.3);
  \end{scope}
  \node[base] (tw0) at (0,1.4) {$w_0$};
  \node[world] (tw1) at (1.4,2.5) {$w_1$};
  \node[worldB] (tw2) at (2.8,1.4) {$w_2$};
  \node[worldMix] (tw3) at (1.4,0.3) {$w_3$};
  \node[font=\scriptsize, text=black!60, align=center] at (1.4,-0.6) {reflexive only\\no transitivity};
\end{scope}

\begin{scope}[shift={(13.6,0)}]
  \node[framelabel] at (1.1,3.3) {$\mathsf{K}$: Arbitrary};
  \begin{scope}[on background layer]
    \draw[arr] (0,1.8) -- (1.1,2.5);
    \draw[arr] (1.1,2.5) -- (2.2,1.4);
    \draw[arr] (1.1,0.3) -- (0,1.8);
  \end{scope}
  \node[base] (kw0) at (0,1.8) {$w_0$};
  \node[world] (kw1) at (1.1,2.5) {$w_1$};
  \node[worldB] (kw2) at (2.2,1.4) {$w_2$};
  \node[worldMix] (kw3) at (1.1,0.3) {$w_3$};
  \node[font=\scriptsize, text=black!60, align=center] at (1.1,-0.6) {no constraints\\chaotic structure};
\end{scope}

\end{tikzpicture}%
}
\caption{Four types of Kripke frames illustrating the modal hierarchy. The base world $w_0$ is shown in dark gray. In $\mathsf{S5}$ (equivalence), all worlds are mutually accessible and uniformly colored---a single cluster. In $\mathsf{S4}$ (preorder), directed accessibility creates distinct regions (terracotta and navy). In $\mathsf{T}$ (reflexive), structure begins to dissolve: accessibility is irregular and worlds carry heterogeneous structural roles. In $\mathsf{K}$ (arbitrary), no constraints govern accessibility. As one descends the hierarchy, the frame's internal organization weakens and the automorphism group becomes less constrained.}
\label{fig:frames}
\end{figure}

\subsection{Automorphisms and stabilizers}

\begin{definition}[Frame automorphism]
An \emph{automorphism} of $\langle W, R \rangle$ is a bijection $\pi: W \to W$ such that $R(w, v) \iff R(\pi(w), \pi(v))$ for all $w, v \in W$. We write $\Aut\langle W, R \rangle$ for the automorphism group.
\end{definition}

Automorphisms are the symmetries of the frame. They permute worlds while preserving the accessibility structure, and they are the modal analogue of the permutations that appear in classical exchangeability. For $\mathsf{S5}$ frames, every automorphism permutes within equivalence classes; for $\mathsf{S4}$ frames, automorphisms preserve the preorder; for $\mathsf{K}$ frames, they preserve whatever relational structure happens to be present.

\begin{definition}[Centered symmetry group (stabilizer)]
The \emph{centered symmetry group} of $w_0 \in W$ is $G := \Stab_{w_0} = \{\pi \in \Aut\langle W, R \rangle : \pi(w_0) = w_0\}$.
\end{definition}

The centered symmetry group consists of automorphisms that fix the base world while permuting others. It encodes the symmetries available to an agent situated at $w_0$: the transformations of the frame that leave the agent's standpoint unchanged. Since $G$ preserves $R$ and fixes $w_0$, it permutes $\Acc(w_0)$: if $\pi \in G$ and $w \in \Acc(w_0)$, then $w \neq w_0$ and $R(w_0, w)$, so $\pi(w) \neq w_0$ and $R(w_0, \pi(w))$, giving $\pi(w) \in \Acc(w_0)$.

\begin{definition}[Orbits]
The \emph{$G$-orbits} in $\Acc(w_0)$ are the equivalence classes under the relation $v \sim v' \iff \exists \pi \in G: \pi(v) = v'$. We write $\Orbit = \{O_1, O_2, \ldots\}$ for the set of orbits.
\end{definition}

The orbit structure captures which non-base accessible worlds are ``structurally indistinguishable'' from $w_0$'s perspective (\cref{fig:stabilizer-orbits}). In a point-homogeneous $\mathsf{S5}$ frame where $G$ acts transitively on $\Acc(w_0)$, there is a single non-base orbit; in an $\mathsf{S4}$ frame, there may be several; in a $\mathsf{K}$ frame, the number of orbits can range from one to $|\Acc(w_0)|$.

\begin{figure}[ht]
\centering
\begin{tikzpicture}[
    world/.style={circle, draw=black!70, thick, fill=orbitA!70, text=white, minimum size=9mm, inner sep=0pt, font=\footnotesize\bfseries},
    worldB/.style={circle, draw=black!70, thick, fill=orbitB!70, text=white, minimum size=9mm, inner sep=0pt, font=\footnotesize\bfseries},
    worldC/.style={circle, draw=black!70, thick, fill=orbitC!70, text=white, minimum size=9mm, inner sep=0pt, font=\footnotesize\bfseries},
    base/.style={circle, draw=baseworld, very thick, fill=baseworld, text=white, minimum size=10mm, inner sep=0pt, font=\footnotesize\bfseries},
    arr/.style={-{Stealth[length=5pt,width=4pt]}, thin, black!35},
    autarr/.style={{Stealth[length=5pt,width=4pt]}-{Stealth[length=5pt,width=4pt]}, semithick, densely dashed, color=black!55},
    orbitbox/.style={draw=dashborder, dashed, thick, rounded corners=5pt, inner sep=10pt},
    orbitboxB/.style={draw=orbitB, dashed, thick, rounded corners=5pt, inner sep=10pt},
    orbitboxC/.style={draw=orbitC, dashed, thick, rounded corners=5pt, inner sep=10pt},
    wavey/.style={decorate, decoration={snake, amplitude=1.5pt, segment length=6pt}, gray!50, semithick},
]

\begin{scope}[on background layer]
  \draw[arr] (0,1.5) -- (2.8,3.6);
  \draw[arr] (0,1.5) -- (4.3,3.6);
  \draw[arr] (0,1.5) -- (5.8,3.6);
  \draw[arr] (0,1.5) -- (3.2,1.5);
  \draw[arr] (0,1.5) -- (5.2,1.5);
  \draw[arr] (0,1.5) -- (4.2,-0.2);
\end{scope}

\node[base] (w0) at (0,1.5) {$w_0$};

\draw[semithick, black!45, -{Stealth[length=5pt]}] (w0) to[out=160,in=220,looseness=5] (w0);
\node[font=\scriptsize\itshape, text=black!50, anchor=east] at (-0.75,1.5) {fixed by $G$};

\node[world] (a1) at (2.8,3.6) {$w_1$};
\node[world] (a2) at (4.3,3.6) {$w_2$};
\node[world] (a3) at (5.8,3.6) {$w_3$};

\node[worldB] (b1) at (3.2,1.5) {$w_4$};
\node[worldB] (b2) at (5.2,1.5) {$w_5$};

\node[worldC] (c1) at (4.2,-0.2) {$w_6$};

\begin{scope}[on background layer]
  \node[orbitbox, fill=orbitA!6, fit=(a1)(a2)(a3)] (boxA) {};
  \node[orbitboxB, fill=orbitB!6, fit=(b1)(b2)] (boxB) {};
  \node[orbitboxC, fill=orbitC!6, fit=(c1)] (boxC) {};
\end{scope}

\node[font=\small, text=orbitA!80!black, anchor=south] at (boxA.north) {Orbit $O_1$: $\Lambda_{O_1}$ rigid};
\node[font=\small, text=orbitB!80!black, anchor=west] at (boxB.east) {Orbit $O_2$: $\Lambda_{O_2}$ rigid};
\node[font=\small, text=orbitC!80!black, anchor=west] at (boxC.east) {Orbit $O_3$: singleton};

\draw[autarr, bend left=40] (a1) to node[above, font=\scriptsize, fill=white, inner sep=1pt] {$\pi$} (a2);
\draw[autarr, bend left=40] (a2) to node[above, font=\scriptsize, fill=white, inner sep=1pt] {$\pi$} (a3);

\draw[autarr, bend right=40] (b1) to node[below, font=\scriptsize, fill=white, inner sep=1pt] {$\pi'$} (b2);

\draw[wavey] ($(boxA.south)+(0.5,-0.05)$) -- ($(boxB.north)+(0.0,0.05)$);
\draw[wavey] ($(boxB.south)+(-0.3,-0.05)$) -- ($(boxC.north)+(0.0,0.05)$);

\node[font=\small\itshape, text=black!50, anchor=south west] at (-0.5,4.3) {$G = \Stab_{w_0}$};

\end{tikzpicture}
\caption{Stabilizer action and orbit decomposition. The base world $w_0$ (dark gray) is fixed by every $\pi \in G = \Stab_{w_0}$. The non-base accessible set $\Acc(w_0) = \{w_1, \ldots, w_6\}$ decomposes into three $G$-orbits, distinguished by color. Dashed arrows show stabilizer automorphisms permuting worlds within their orbit. Under (Ext) and countable infinitude, each such orbit carries a \emph{rigid} directing measure $\Lambda_O$, constant across its worlds. Wavy lines indicate that $G$-invariance does not force independence across orbits; the joint distribution of $\{\Lambda_O\}$ may be constrained by additional structure in the group action but is not determined by $G$-invariance alone.}
\label{fig:stabilizer-orbits}
\end{figure}

\subsection{Valuations and the automorphism action}

\begin{definition}[Valuation space]
Let $L = \{\ell_1, \ldots, \ell_k\}$ be a finite set of propositional atoms. A \emph{valuation} on $W$ is a function $V: W \to \{0,1\}^L$. The \emph{valuation space} is $\Omega = (\{0,1\}^L)^W$, equipped with the product $\sigma$-field $\mathcal{F}$.
\end{definition}

Since $W$ is countable and $L$ is finite, $(\Omega, \mathcal{F})$ is a standard Borel space.

\begin{definition}[Automorphism action on valuations]
For $\pi \in \Aut\langle W, R \rangle$ and $V \in \Omega$, define $(\pi \cdot V)(w) := V(\pi^{-1}(w))$.
\end{definition}

This action is measurable and preserves the product $\sigma$-field.

\subsection{Valuations and formula semantics}

The framework places a probability measure $P$ on the space of valuation profiles $V: W \to \{0,1\}^L$. Any propositional or modal formula $\varphi$ evaluated at a world $w$ in a valuation $V$ determines a measurable event: $\{V \in \Omega : M, w \models_V \varphi\}$, where $M$ is the Kripke model with frame $\langle W, R \rangle$ and valuation $V$. By standard Kripke semantics, propositional atoms are assigned truth values according to $V$, and modal formulas are evaluated via the accessibility relation $R$. The probability measure $P$ induces a probability on formulas via the event assignment: for any formula $\varphi$ and world $w$, the probability that $\varphi$ holds at $w$ under $P$ is $P(\{V : M, w \models_V \varphi\})$. In this framework, we study symmetry constraints on the underlying valuation measure $P$, not an alternative truth theory. The representation theorems characterize how modal exchangeability constrains $P$, which in turn constrains the induced probabilities on formulas.

\subsection{Modal exchangeability}

\begin{definition}[Modal exchangeability]
\label{def:world-exch}
A probability measure $P$ on $(\Omega, \mathcal{F})$ is \emph{modally exchangeable at $w_0$} if $P$ is $G$-invariant: $P(\pi^{-1}(A)) = P(A)$ for all $\pi \in G$ and $A \in \mathcal{F}$.
\end{definition}

Modal exchangeability expresses rational indifference with respect to modal structure. If two configurations of truth-values across worlds are related by an accessibility-preserving automorphism that fixes the base world, they must receive the same probability. The agent has no probabilistic basis for distinguishing worlds that occupy the same structural position in the frame.

When $R$ is an equivalence relation and $G$ contains all finite permutations of $\Acc(w_0)$, modal exchangeability reduces to classical exchangeability. In this sense, de Finetti's symmetry is the special case of modal exchangeability on $\mathsf{S5}$ frames.

\subsection{The extension property}

The general representation theorem yields conditional identical distributions within orbits. To obtain the stronger conclusion of conditional independence (the full de Finetti structure), the stabilizer must contain sufficiently many permutations.

\begin{definition}[Extension property]
\label{def:ext}
The stabilizer $G$ has the \emph{finite-support extension property} (Ext) on an orbit $O$ if: for any finite, pairwise-disjoint subsets $F_1, \ldots, F_m \subseteq O$ and $F'_1, \ldots, F'_m \subseteq O$ with $|F_i| = |F'_i|$ for all $i$, there exists $\pi \in G$ such that $\pi(F_i) = F'_i$ for all $i$.
\end{definition}

Informally, (Ext) says the orbit has ``enough symmetries'' for exchangeability to bite: any partial rearrangement of finitely many worlds extends to a full automorphism. The simplest sufficient condition is that the restricted action $G|_O$ be the full symmetric group---which holds whenever the induced subframe on $O$ carries only reflexive accessibility and all worlds in $O$ have identical incidence patterns to worlds outside $O$. More generally, (Ext) holds whenever the relational structure on $O$ is ultrahomogeneous in the Fra\"{i}ss\'{e} sense.\footnote{By Fra\"{i}ss\'{e}'s theorem~\citep{Fraisse1954,Hodges1993,TentZiegler2012}, a countable relational structure is ultrahomogeneous if and only if it is the Fra\"{i}ss\'{e} limit of an amalgamation class. For modal frames, an orbit satisfies (Ext) whenever the relational structure induced on it by $R$ is such a limit. See the proof of Proposition~\ref{prop:ext-sufficient} in \S\ref{sec:proofs} for details.} Both the $\mathsf{S5}$ frames of Theorem~\ref{thm:S5} and the two-orbit $\mathsf{S4}$ frame of our running example satisfy (Ext).

When (Ext) fails, the representation still yields identical conditional marginals within orbits, but not conditional i.i.d.\ A concrete failure:

\begin{example}[Linear-order orbit where (Ext) fails]
\label{ex:ext-failure-preview}
Consider an orbit with the structure of $(\Z, \leq)$: a set of worlds $\{a_i : i \in \Z\}$ on which $R$ is the standard linear order and the stabilizer consists of the translations $\pi_k(a_i) = a_{i+k}$. The action is transitive, but translations do not realize arbitrary finite permutations (swapping $a_0$ and $a_1$ reverses order), so (Ext) fails. The conditional law of $\{V(a_i)\}_{i \in \Z}$ given $\Xi$ is shift-stationary but not exchangeable. This is exactly the $b$-region of the running example (Example~\ref{ex:S4}).
\end{example}

\noindent Property (Ext) thus marks the boundary between \emph{statistical homogeneity} (within-orbit conditional i.i.d.\ given a directing law) and \emph{marginal indistinguishability} (identical conditional marginals from $G$-invariance alone). The three layers of within-orbit structure are summarized as follows:

\begin{proposition}[Three layers of within-orbit structure]
\label{prop:ext-role}
Within an orbit $O \subseteq \Acc(w_0)$, the representation of a modally exchangeable measure admits three layers, each corresponding to a strengthening of the input:
\begin{enumerate}[leftmargin=2em]
\item \textup{($G$-invariance alone)} Identical conditional marginals: $\Law(V(v) \mid \Xi) = \Law(V(v') \mid \Xi)$ for $v, v' \in O$.
\item \textup{($G$-invariance + (Ext))} Conditional exchangeability of $\{V(w)\}_{w \in O}$ given $\Xi$.
\item \textup{($G$-invariance + (Ext) + $|O| = \aleph_0$)} Conditional i.i.d.: $\{V(w)\}_{w \in O}$ are i.i.d.\ given a rigid directing measure $\Lambda_O$.
\end{enumerate}
\end{proposition}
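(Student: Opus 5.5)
Throughout, I take $\Xi$ to be the $\sigma$-field of $G$-invariant events, $\mathcal{I}_G := \{A \in \mathcal{F} : \pi^{-1}(A) = A \text{ for all } \pi \in G\}$, and $\Law(\cdot \mid \Xi)$ to mean a regular conditional distribution. Such a distribution exists because $(\Omega,\mathcal{F})$ is standard Borel.

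\textbf{Basic transfer identity.} If $P$ is $G$-invariant and $A \in \Xi$, then for any bounded measurable $f$ and any $\pi \in G$,
\[
E_P[f(\pi\cdot V)\,1_A] = E_P[f(V)\,1_{\pi^{-1}(A)}] = E_P[f(V)\,1_A].
\]
Hence $\Law(\pi\cdot V \mid \Xi) = \Law(V \mid \Xi)$ almost surely, for each fixed $\pi$. All three layers are obtained by applying this identity to suitably chosen $\pi$. The almost-sure qualifier needs care: there is one null set per $\pi$. I will therefore only ever use countably many $\pi$, one for each finite configuration of worlds, which is possible because $W$ is countable.

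\textbf{Layer 1.} Given $v, v' \in O$, choose $\pi \in G$ with $\pi(v') = v$. Then
\[
(\pi\cdot V)(v) = V(\pi^{-1}(v)) = V(v').
\]
Taking $f$ to depend only on the coordinate $v$ in the transfer identity gives $\Law(V(v') \mid \Xi) = \Law(V(v) \mid \Xi)$ almost surely. Only countably many pairs $(v,v')$ occur, so the equality holds simultaneously for all of them.

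\textbf{Layer 2.} Fix distinct $w_1,\dots,w_n \in O$ and a permutation $\sigma$ of $\{1,\dots,n\}$. Apply (Ext) with singleton blocks $F_i = \{w_i\}$ and $F_i' = \{w_{\sigma(i)}\}$; this yields $\pi \in G$ realizing $w_i \mapsto w_{\sigma(i)}$. The transfer identity, applied to $f$ depending on the coordinates $w_1,\dots,w_n$, then gives
\[
\Law\bigl(V(w_{\sigma(1)}),\dots,V(w_{\sigma(n)}) \mid \Xi\bigr) = \Law\bigl(V(w_1),\dots,V(w_n) \mid \Xi\bigr) \quad \text{a.s.}
\]
There are countably many tuples and permutations, so off a single null set every finite-dimensional conditional law is permutation-invariant. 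This is conditional exchangeability of $\{V(w)\}_{w\in O}$ given $\Xi$.

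\textbf{Layer 3.} This is the main step. Enumerate $O = \{o_1, o_2, \dots\}$ and set $\xi_n := V(o_n)$, which takes values in the finite set $\{0,1\}^L$. Layer 2 says that the pair $\bigl((\xi_n)_n, \mathbf{1}_A\bigr)$ has a law invariant under finite permutations of the indices, for every $A \in \Xi$. Equivalently, $(\xi_n)$ is exchangeable over $\Xi$.

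I then invoke the conditional de Finetti theorem (Kallenberg's version for sequences exchangeable over a $\sigma$-field). It gives the following.
\begin{enumerate}[leftmargin=2em]
\item The empirical measures $\frac1N\sum_{n\le N}\delta_{\xi_n}$ converge almost surely to a random probability measure $\Lambda_O$ on $\{0,1\}^L$.
\item The $\xi_n$ are i.i.d.\ with law $\Lambda_O$, conditionally on $\Xi \vee \sigma(\Lambda_O)$.
\end{enumerate}

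\textbf{Main obstacle: rigidity.} Two things must be checked to show that $\Lambda_O$ depends on the orbit and not on the world or the enumeration.

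First, independence of the enumeration. By Hewitt--Savage, $\Lambda_O$ is measurable with respect to the exchangeable $\sigma$-field of the sequence. Any re-enumeration of $O$ produces the same limit almost surely: conditionally on $\Lambda_O$ the coordinates are i.i.d., so the strong law of large numbers applies to every enumeration.

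Second, $G$-invariance. Every $\pi \in G$ permutes $O$, so $\Lambda_O(\pi\cdot V)$ is the empirical limit along a re-enumeration of $O$. It therefore equals $\Lambda_O(V)$ almost surely.

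Consequently $\Lambda_O$ is one directing measure attached to $O$ as a whole, identical across all $w \in O$. This is the rigidity claim of the third layer.
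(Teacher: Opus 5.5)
Your proof is correct. For layers 1 and 2 it is the paper's argument (Lemmas~\ref{lem:identical} and~\ref{lem:exch}). A stabilizer element realizing the swap, or via (Ext) with singleton blocks the finite permutation, is combined with invariance of $P$ and of the conditioning $\sigma$-field. Your transfer identity makes explicit that \emph{invariance} is what is used; Lemma~\ref{lem:exch} cites ergodicity of the fibers, which is beside the point. Your countable null-set bookkeeping is left implicit in the paper.

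Layer 3 is where you genuinely diverge. The paper disintegrates $P$ into $G$-ergodic fibers, applies Hewitt--Savage fiberwise, and declares $\Lambda_O$ $\FG$-measurable ``by construction,'' so the family is i.i.d.\ given $\Xi$ alone. You bypass the ergodic decomposition and instead:
\begin{itemize}
\item apply de Finetti for sequences exchangeable over a $\sigma$-field;
\item take $\Lambda_O$ to be the empirical limit;
\item obtain i.i.d.\ given $\Xi \vee \sigma(\Lambda_O)$, hence given $\Lambda_O$ by the tower property, which is the form the proposition states;
\item \emph{prove} rigidity as a.s.\ invariance $\Lambda_O(\pi\cdot V)=\Lambda_O(V)$ from the strong law along re-enumerations, which is the invariance asserted in Theorem~\ref{thm:general}(5).
\end{itemize}

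Your weaker conditioning is the right one for $\Xi$ defined, as you do, by strictly invariant events. Take Example~\ref{ex:S4} with a single atom. Let $V\equiv 0$ off $O_a$, and let $V|_{O_a}$ be a mixture of i.i.d.\ $\mathrm{Bernoulli}(p)$ laws with $p$ uniform on $[0,1]$. Since $G|_{O_a}=\Sym(O_a)$, the valuations of this kind with infinitely many $0$s and $1$s on $O_a$ form a single $G$-orbit of full measure. Hence $\FG$ is $P$-trivial, and the fiber is $P$ itself: ergodic in the sense of trivial invariant events, yet not i.i.d. Moreover, $\Lambda_{O_a}$ agrees a.s.\ with no $\FG$-measurable variable. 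The paper's stronger phrasing holds only if $\FG$ is read as the $\sigma$-field of almost-invariant events. Under that reading your $\Lambda_O$ is $\Xi$-measurable, and your conclusion reduces to i.i.d.\ given $\Xi$.
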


\noindent The three layers are distinct; each is strictly stronger than the preceding one, and each corresponds to a specific theorem or lemma below.

\subsection{Orbit structure and conditional independence}

The orbit decomposition should not be conflated with conditional independence. By Proposition~\ref{prop:ext-role}, worlds in the same orbit have identical conditional marginals (layer 1), and under (Ext) they are conditionally exchangeable (layer 2); under (Ext) and countable infinitude, the stronger conditional i.i.d.\ representation obtains (layer 3). But within-orbit structure is all $G$-invariance delivers: it does not force conditional independence across orbits. For orbits in $\Orbit^\infty$, each carries its own rigid directing measure, but $G$-invariance alone does not determine the joint distribution of $\{\Lambda_O\}_{O \in \Orbit^\infty}$; any constraints on cross-orbit coupling must come from the agent's prior, from the latent invariant structure $\Xi$, or from richer structure in the group action. Orbit membership thus yields marginal equivalence and, under the additional conditions, exchangeability or i.i.d.\ structure within orbits---but not cross-orbit independence.

\section{General Representation over Arbitrary Frames}
\label{sec:general}

The theorem applies to any countable frame with any accessibility relation. The centered symmetry group partitions $\Acc(w_0)$ into orbits; within each orbit, worlds are probabilistically interchangeable. On orbits satisfying (Ext) and countable infinitude, this upgrades to conditional i.i.d.\ given a rigid orbit-specific directing measure; other orbits support only weaker invariant structure. $G$-invariance does not force independence across orbits.

\begin{theorem}[General modal de Finetti]
\label{thm:general}
Let $\langle W, R \rangle$ be a countable modal frame, $w_0 \in W$ a base world, $G = \Stab_{w_0}$ the stabilizer, and $P$ a modally exchangeable probability measure on $(\Omega, \mathcal{F})$. Let $\Orbit = \{O_1, O_2, \ldots\}$ denote the $G$-orbits in $\Acc(w_0)$, and let
\[
\Orbit^\infty := \{O \in \Orbit : |O| = \aleph_0 \text{ and } G|_O \text{ has property (Ext)}\}.
\]
Then:
\begin{enumerate}[leftmargin=2em]
\item \textbf{Ergodic decomposition.} There exists a $\FG$-measurable random variable $\Xi$ such that $P = \int P(\cdot \mid \Xi = \xi) \, P_\Xi(d\xi)$, with each fiber $P(\cdot \mid \Xi = \xi)$ being $G$-ergodic. \emph{(Informally: a latent invariant random parameter indexing ergodic components.)}

\item \textbf{Identical distributions within orbits.} For any $v, v'$ in the same $G$-orbit (regardless of (Ext) or cardinality), $\Law(V(v) \mid \Xi) = \Law(V(v') \mid \Xi)$ holds $P$-a.s.

\item \textbf{Conditional i.i.d.\ on $\Orbit^\infty$.} For each $O \in \Orbit^\infty$, there exists a random probability measure $\Lambda_O$ on $\{0,1\}^L$ such that, conditional on $\Xi$, the valuations $\{V(w)\}_{w \in O}$ are i.i.d.\ with law $\Lambda_O$. (For finite orbits, exchangeability yields only a finite-exchangeability approximation; for infinite orbits on which (Ext) fails, Hewitt--Savage does not apply and no directing measure $\Lambda_O$ is obtained from this theorem.)

\item \textbf{Cross-orbit independence not forced.} $G$-invariance does not force independence across orbits in $\Orbit^\infty$; additional structure in the group action may still constrain the joint distribution of $\{\Lambda_O\}_{O \in \Orbit^\infty}$. Coupling between $\Orbit^\infty$ and the remainder $\Orbit \setminus \Orbit^\infty$ is encoded in the latent invariant $\Xi$ rather than in orbit-wise directing measures.

\item \textbf{Rigidity on $\Orbit^\infty$.} For each $O \in \Orbit^\infty$, the directing measure $\Lambda_O$ is $\FG$-measurable: for any $\pi \in G$ with $\pi(O) = O$, $\Lambda_O(\pi \cdot V) = \Lambda_O(V)$ holds $P$-a.s. The measure does not depend on the choice of world within $O$.
\end{enumerate}
\end{theorem}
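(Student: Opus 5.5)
We prove the five clauses in order, with $\FG$ the $\sigma$-field of $G$-invariant events, $\FG=\{A\in\mathcal{F}:\pi^{-1}(A)=A \text{ for all } \pi\in G\}$. The basic device is to take $\Xi$ to be the identity map of $(\Omega,\FG)$, or, since $\Omega$ is standard Borel, a countably generated version of the invariant $\sigma$-field. Conditioning on $\Xi$ is then conditioning on $\FG$.

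\textbf{Clause 1.} The group $G$ need not be countable or locally compact, so ergodic decomposition for group actions cannot be quoted directly. We reduce to a countable subgroup instead. Because $W$ is countable, $G$ is a subgroup of $\Sym(W)$, which is Polish under pointwise convergence. Hence $G$ has a countable dense subgroup $G_0$. Any cylinder event depends on finitely many coordinates, so it is $G$-invariant as soon as it is $G_0$-invariant up to the obvious approximation. We will check that $G$-invariance and $G_0$-invariance of $P$ coincide, and that their invariant $\sigma$-fields agree modulo $P$-null sets.

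For a countable group acting measurably on a standard Borel space, the ergodic decomposition (Varadarajan/Farrell) gives regular conditional probabilities $P(\cdot\mid\FG)$ whose fibers are a.s.\ invariant and ergodic. This null-set bookkeeping is the one genuinely delicate point, and we expect it to be the main obstacle: it requires passing from uncountable $G$ to countable $G_0$ without losing invariance of the fibers.

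\textbf{Clause 2.} For $A\subseteq\{0,1\}^L$ and $\pi\in G$ with $\pi(v)=v'$, invariance of $P$ together with invariance of every $B\in\FG$ gives
\[
P(V(v')\in A,\ B)=P(V(v)\in A,\ \pi^{-1}B)=P(V(v)\in A,\ B).
\]
Hence $P(V(v)\in A\mid\FG)=P(V(v')\in A\mid\FG)$ a.s. Since $\{0,1\}^L$ is finite, only finitely many sets $A$ occur, and the equality holds simultaneously for all of them.

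\textbf{Clause 3.} Fix $O\in\Orbit^\infty$ and enumerate it as $O=\{o_1,o_2,\dots\}$. For any finite permutation $\sigma$ of the indices, apply (Ext) to the singletons $F_i=\{o_i\}$ and $F'_i=\{o_{\sigma(i)}\}$, taken over the finite support of $\sigma$. This yields $\pi\in G$ realizing $\sigma$ on that support. The argument of Clause 2, run on finite cylinders, shows that $(V(o_n))_n$ is exchangeable under each fiber $P(\cdot\mid\Xi)$.

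By de Finetti/Hewitt--Savage, $\Lambda_O$ is defined as the a.s.\ limit of the empirical measures $\frac1n\sum_{i\le n}\delta_{V(o_i)}$, and the sequence is i.i.d.\ given its tail $\sigma$-field with law $\Lambda_O$. It remains to show that conditioning on $\Xi$ already gives i.i.d. To see this, we show that $\Lambda_O$ is $\FG$-measurable, which is Clause 5. Once that holds, the fibers are exchangeable and ergodic, so Hewitt--Savage forces $\Lambda_O$ to be a.s.\ constant on each fiber, and the fiber law is $\Lambda_O^{\otimes O}$.

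\textbf{Clause 5.} We establish rigidity as follows. For $\pi\in G$ with $\pi(O)=O$, $\Lambda_O(\pi\cdot V)$ is the limit of the empirical measures along the reordered enumeration $\pi^{-1}(o_i)$. Under the conditional i.i.d.\ structure, this limit does not depend on the enumeration. We see this via a Glivenko--Cantelli/exchangeable law-of-large-numbers argument: frequencies along any injective enumeration converge to the same $\Lambda_O$ a.s., because a finite alphabet reduces this to countably many frequency statements.

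The statement above is stated only for $\pi$ with $\pi(O)=O$. Every $\pi\in G$ preserves each orbit, however, so the condition $\pi(O)=O$ holds automatically. The a.s.\ invariance therefore holds for all $\pi\in G_0$, and hence for all of $G$, modulo null sets, giving $\FG$-measurability.

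\textbf{Clause 4.} This is a non-implication, so we prove it by an example. Take the frame with two (Ext) orbits from Example~\ref{ex:S4}, or the $\mathsf{S5}$-type frame with two infinite complete clusters. Let $P$ be the mixture over $p\sim\mathrm{Unif}[0,1]$ of valuations that are i.i.d.\ Bernoulli$(p)$ on both orbits with the same $p$. This $P$ is $G$-invariant, yet $\Lambda_{O_1}=\Lambda_{O_2}$ are perfectly dependent. Coupling with non-$\Orbit^\infty$ orbits is simply the statement that everything is conditioned on the common $\Xi$.
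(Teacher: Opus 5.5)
The gap sits exactly where you flagged it, and it is not mere bookkeeping. With $\FG$ defined by \emph{strict} invariance, two of your claims are false: that $\FG$ and $\mathcal{F}^{G_0}$ agree modulo $P$-null sets, and the Clause~5 inference from ``$\Lambda_O\circ\pi=\Lambda_O$ $P$-a.s.\ for each fixed $\pi\in G$'' to ``$\Lambda_O$ is $\FG$-measurable''. Take $W=\{w_0\}\cup\{a_i:i\in\N\}$ with $R=W\times W$, one atom, $V(w_0)\equiv 0$, and $(V(a_i))_i$ a uniform mixture over $p$ of i.i.d.\ Bernoulli$(p)$ sequences. Then $G$ acts on $\Acc(w_0)$ as the whole of $\Sym(\N)$, infinitary permutations included. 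The $G$-orbit of a valuation is determined by $V(w_0)$ together with the cardinalities of $\{i:V(a_i)=1\}$ and $\{i:V(a_i)=0\}$. Both are $\aleph_0$ almost surely, so $\FG$ is $P$-trivial. For the countable dense subgroup $G_0$ of finitary permutations, by contrast, $\mathcal{F}^{G_0}$ contains $\{\limsup_n\frac1n\sum_{i\le n}V(a_i)\le\frac12\}$, which has probability $\frac12$. Your cylinder-event heuristic never reaches such tail events, and they are precisely where $G$ and $G_0$ differ. In this example $\Lambda_O$ is a.s.\ invariant under each $\pi\in G$, so that part of your Clause~5 is right. But $\Lambda_O$ is not a.s.\ equal to any $\FG$-measurable function, and since every $\FG$-measurable $\Xi$ is a.s.\ constant, the $V(a_i)$ are not i.i.d.\ given $\Xi$. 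So under your definition Clause~3 is actually false, and the repair has to change the definition rather than the null-set argument.

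The fix is to condition on the $\sigma$-field $\mathcal{I}_P=\{A\in\mathcal{F}:P(A\,\triangle\,\pi^{-1}A)=0\text{ for every }\pi\in G\}$ of $P$-almost-invariant sets. This is how the statement's own Part~5 glosses $\FG$-measurability. Because $G\le\Sym(W)$ acts continuously on $\Omega$, the Koopman representation of $G$ on $L^2(P)$ is continuous, so every $G_0$-invariant set lies in $\mathcal{I}_P$. Conversely, $A\mapsto\bigcap_{\pi\in G_0}\pi^{-1}A$ replaces any $A\in\mathcal{I}_P$ by an a.s.\ equal $G_0$-invariant set. The countable-group ergodic decomposition over $\mathcal{F}^{G_0}$ then yields fibers that are $G_0$-ergodic and, by continuity of the action, $G$-invariant; they are also $G$-ergodic, since $\FG\subseteq\mathcal{F}^{G_0}$. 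Your per-$\pi$ invariance argument gives exactly $\mathcal{I}_P$-measurability of $\Lambda_O$. Clause~3 then follows as you describe, since $\Lambda_O$ is a.s.\ constant on a.e.\ fiber. With this change your dense-subgroup route is sound and more explicit than the paper's sketch. The paper disintegrates over $\FG$ via a general disintegration theorem and asserts that $\Lambda_O$ is $\FG$-measurable ``by construction'', without explaining why conditioning on $\Xi$, rather than on $\Lambda_O$, already gives i.i.d.

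Separately, neither frame you propose for Clause~4 has two orbits in $\Orbit^\infty$. In Example~\ref{ex:S4} the $b$-orbit fails (Ext), and in any $\mathsf{S5}$ frame $G$ acts on $\Acc(w_0)$ as the full symmetric group, so there is only one orbit. Your common-$p$ mixture example works on an $\mathsf{S4}$ frame in which $w_0$ sees an infinite complete cluster and an infinite antichain of reflexive points, so that $G$ acts as a product of two full symmetric groups.
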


\begin{proof}[Proof sketch]
Disintegrate $P$ over the invariant $\sigma$-field $\FG$ (standard Borel, so this requires only DC~\citep[Theorem 452I]{Fremlin2006}), yielding an ergodic decomposition $P = \int P(\cdot \mid \Xi = \xi)\,P_\Xi(d\xi)$. Part~2: any $v, v'$ in the same orbit are interchangeable by some $\pi \in G$, and $G$-invariance of $P$ and $\Xi$ gives identical conditional laws. Part~3: for $O \in \Orbit^\infty$, (Ext) ensures that $G$ induces all finite permutations within $O$, so the conditional distribution is classically exchangeable; Hewitt--Savage~\citep{Hewitt1955} (applicable because $O$ is countably infinite) then delivers $\Lambda_O$. Part~4: no element of $G$ maps one orbit to another, so $G$-invariance imposes no independence requirement across orbits. Part~5: for $O \in \Orbit^\infty$, $\Lambda_O$ is $\FG$-measurable by construction, hence rigid. A step-by-step version appears in \S\ref{sec:proofs}.
\end{proof}

\begin{remark}[Orbit structure as the primary within-orbit invariant]
The orbit structure is the primary invariant governing within-orbit probabilistic structure. Two frames validating the same modal logic may have different orbit structures and hence different representation forms. The modal logic constrains which orbit structures are possible (e.g., point-homogeneous $\mathsf{S5}$ yields a single orbit); the orbit geometry then determines the within-orbit decomposition, while cross-orbit relationships depend on additional structure.
\end{remark}

\begin{remark}[Interpretive consequences]
\label{rmk:interpretive-preview}
Given modal exchangeability, the representation theorem implies an orbit-wise indifference constraint: within any orbit satisfying (Ext) and countable infinitude, the directing measure $\Lambda_O$ treats all worlds identically. Observations at worlds in orbit $O$ update the conditional law there but are uninformative about $O' \neq O$ unless the agent's prior couples them. In the running example, the $a$-orbit admits a rigid directing measure while the $b$-orbit admits only a shift-stationary representation; across regions, no symmetry constraint forces pooling. These consequences are developed in \S\ref{sec:philosophy}.
\end{remark}

\begin{remark}[Relation to Crane--Towsner]
The orbit decomposition corresponds to the ``definable partition'' in \citet{CraneTowsner2018}'s theory of relatively exchangeable structures. The result of this section can be viewed as a modal specialization of that framework, with the novelty lying in the centered-stabilizer construction and its epistemic interpretation. See Appendix~\ref{app:representation} for a comparison.
\end{remark}

\begin{corollary}[Bernoulli parameterization]
\label{cor:bernoulli}
Under the conditions of Theorem~\ref{thm:general}, if the $L$ coordinates are conditionally independent within each world given $\Lambda_O$, then $\Lambda_O = \bigotimes_{\ell=1}^k \mathrm{Bernoulli}(\Theta_{O,\ell})$ for rigid parameters $\Theta_O = (\Theta_{O,1}, \ldots, \Theta_{O,k}) \in [0,1]^L$.
\end{corollary}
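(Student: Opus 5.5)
The argument will be a direct computation from Part~3 of Theorem~\ref{thm:general}, with no new probabilistic machinery. Fix $O \in \Orbit^\infty$. Part~3 gives a random probability measure $\Lambda_O$ on the finite set $\{0,1\}^L$. Part~5 says $\Lambda_O$ is $\FG$-measurable and does not depend on the world chosen in $O$. Since $\{0,1\}^L$ has $2^k$ points, $\Lambda_O$ is simply a random vector in the simplex $\Delta_{2^k-1}$, and measurability of its coordinates is just measurability of $\omega \mapsto \Lambda_O(\omega)(\{x\})$.

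First, I will define the candidate parameters as the one-dimensional marginals of $\Lambda_O$:
\[
\Theta_{O,\ell} := \Lambda_O\bigl(\{x \in \{0,1\}^L : x_\ell = 1\}\bigr), \qquad \ell = 1,\ldots,k.
\]
Each $\Theta_{O,\ell}$ is a finite sum of coordinates of $\Lambda_O$. It is therefore measurable with respect to whatever $\sigma$-field $\Lambda_O$ is measurable for, in particular $\FG$. So $\Theta_O$ inherits rigidity from Part~5 at once: for $\pi \in G$ with $\pi(O)=O$, $\Theta_{O,\ell}(\pi\cdot V)=\Theta_{O,\ell}(V)$ $P$-a.s.

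Second, I will make the hypothesis precise. It says that for $P$-a.e.\ realization, the $k$ coordinates of a draw $X \sim \Lambda_O$ are independent. Under Part~3, this is the same as saying that for $w \in O$, the coordinates $V(w)_{\ell_1},\ldots,V(w)_{\ell_k}$ are conditionally independent given $\Lambda_O$. Independence of the coordinates of a $\{0,1\}^L$-valued variable means that the joint point masses factor:
\[
\Lambda_O(\{x\}) = \prod_{\ell=1}^k \Lambda_O(\{X_\ell = x_\ell\}).
\]
Each factor equals $\Theta_{O,\ell}^{x_\ell}(1-\Theta_{O,\ell})^{1-x_\ell}$, which is exactly the point mass of $\bigotimes_\ell \mathrm{Bernoulli}(\Theta_{O,\ell})$ at $x$. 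Two measures on a finite set that agree on singletons are equal, so the identity holds pointwise on the full-measure set where independence holds.

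The main obstacle is measure-theoretic bookkeeping rather than combinatorics. The hypothesis is conditioned on $\Lambda_O$, while Part~3 conditions on $\Xi$. To reconcile them I will use the fact that $\Lambda_O$ is $\sigma(\Xi)$-measurable: it is $\FG$-measurable, and $\Xi$ generates $\FG$ up to null sets in the ergodic decomposition. This lets conditional independence given $\Xi$ and given $\Lambda_O$ be compared on a common version. The factorization holds only off a $P$-null set. On that set I will redefine $\Theta_O$ arbitrarily in a measurable way, for instance as $0$, which preserves rigidity up to null sets.
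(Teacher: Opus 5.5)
Your proof is correct and is essentially the evident argument the paper leaves implicit, since the corollary is stated without proof: take the one-dimensional marginals of $\Lambda_O$ as $\Theta_{O,\ell}$, use independence to factor the point masses on the finite set $\{0,1\}^L$, and inherit rigidity from Part~5 because $\Theta_O$ is a measurable function of $\Lambda_O$. One small simplification: you do not need $\Xi$ to generate $\FG$ (the paper never asserts this), because Part~3 already gives $\Lambda_O = P(V(w)\in\cdot\mid\Xi)$ a.s., so $\Lambda_O$ is $\sigma(\Xi)$-measurable up to null sets, and the final step of redefining $\Theta_O$ on a null set is likewise unnecessary.
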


\paragraph{Philosophical significance.}
Theorem~\ref{thm:general} traces a pipeline from modal structure to probabilistic structure. The accessibility relation yields the automorphism group; the centered stabilizer yields the orbit geometry; the orbit geometry constrains what invariance can require of credence. Orbits in $\Orbit^\infty$ carry rigid directing measures and support an orbit-wise de Finetti representation; other orbits carry only weaker invariant structure (identical conditional marginals, and, where the group action is transitive but order-preserving, a shift-stationary conditional law).

The modal structure shapes credal architecture: it constrains where orbit-wise parameters can arise, which worlds share a parameter, and where Bayesian learning pools. The classical de Finetti theorem is the limiting case in which the modal structure is trivial, the group is maximal, and the architecture is globally homogeneous.

\section{The Modal Hierarchy}
\label{sec:hierarchy}

The general theorem applies to all countable frames. What distinguishes the modal logics is the type of orbit structure they permit.

\subsection{Point-homogeneous S5: global de Finetti}

\begin{definition}[Point-homogeneity]
\label{def:point-hom}
The frame is \emph{point-homogeneous at $w_0$} when $G$ acts transitively on $\Acc(w_0)$: for any $v, v' \in \Acc(w_0)$, some $\pi \in G$ has $\pi(v) = v'$.
\end{definition}

\begin{theorem}[Modal de Finetti: S5]
\label{thm:S5}
Let $\langle W, R \rangle$ be an $\mathsf{S5}$ frame with $R$ an equivalence relation and $P$ a modally exchangeable probability measure on $\Omega$. Assume the frame is point-homogeneous at $w_0$, the stabilizer $G$ has property \textup{(Ext)} on $\Acc(w_0)$, and $\Acc(w_0)$ is countably infinite.

Let $\widetilde{V} := V|_{\Acc(w_0)}$ denote the valuation restricted to $\Acc(w_0)$, and let $P_{\mathrm{acc}}$ be the law of $\widetilde{V}$ under $P$ (equivalently, the marginal of $P$ on the $\sigma$-field generated by $\{V(w) : w \in \Acc(w_0)\}$). Then there exists a probability measure $\mu$ on $\mathcal{M}(\{0,1\}^L)$ such that for every measurable $A$,
\[
P_{\mathrm{acc}}(\widetilde{V} \in A) = \int_{\mathcal{M}} P_\Lambda(\widetilde{V} \in A) \, \mu(d\Lambda),
\]
where $P_\Lambda$ is the product measure under which $\{\widetilde{V}(w)\}_{w \in \Acc(w_0)}$ are i.i.d.\ with common law $\Lambda$. \textup{(}For finite $\Acc(w_0)$, the same argument yields only a finite-exchangeability approximation; the exact i.i.d.\ mixture requires countable infinitude.\textup{)}
\end{theorem}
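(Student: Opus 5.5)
The plan is to reduce the statement directly to the classical de Finetti--Hewitt--Savage theorem, applied to the restricted process $\widetilde{V} = \{V(w)\}_{w \in \Acc(w_0)}$. Equivalently, it is the special case of Theorem~\ref{thm:general} with a single orbit: by point-homogeneity, $\Acc(w_0)$ is one $G$-orbit $O$, and by hypothesis $O \in \Orbit^\infty$. The main obstacle is the direct route, which shows that $P_{\mathrm{acc}}$ is invariant under every finite permutation of $\Acc(w_0)$, and that is where I will put the care.

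\textbf{Step 1: every finite permutation is realized by $G$ on finitely many coordinates.} Fix an enumeration $\Acc(w_0) = \{w_1, w_2, \ldots\}$ and a finite permutation $\sigma$ of $\Acc(w_0)$ with finite support $S$. Apply (Ext) to the singletons $F_i = \{s_i\}$ and $F'_i = \{\sigma(s_i)\}$ for $s_i \in S$. These are pairwise disjoint and of matching size, so (Ext) gives $\pi \in G$ with $\pi(s) = \sigma(s)$ for all $s \in S$. Such a $\pi$ need not agree with $\sigma$ off $S$, so $G$-invariance does not immediately give invariance of the full law under $\sigma$.

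I avoid this by working only with finite-dimensional distributions. Take a finite set $T \supseteq S$ of coordinates and apply (Ext) to singletons indexed by all of $T$: set $F_t = \{t\}$ and $F'_t = \{\sigma(t)\}$, noting that $\sigma(T) \subseteq \Acc(w_0)$ and that $\sigma$ is the identity on $T \setminus S$. This gives $\pi \in G$ with $\pi|_T = \sigma|_T$. Since $\pi$ fixes $w_0$ and preserves $\Acc(w_0)$, $G$-invariance of $P$ yields
\[
\Law\bigl(V(\sigma(t)) : t \in T\bigr) = \Law\bigl(V(t) : t \in T\bigr),
\]
after tracking the convention $(\pi\cdot V)(w) = V(\pi^{-1}(w))$, applying the argument to $\sigma^{-1}$ if needed. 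Cylinder sets form a $\pi$-system generating the product $\sigma$-field, so $P_{\mathrm{acc}}$ is invariant under $\sigma$. Hence $\widetilde{V}$, viewed as the sequence $(V(w_n))_{n \ge 1}$, is exchangeable in the classical sense.

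\textbf{Step 2: apply Hewitt--Savage.} The state space $\{0,1\}^L$ is finite, hence compact Polish, and the index set is countably infinite. Hewitt--Savage~\citep{Hewitt1955} therefore gives a unique probability measure $\mu$ on $\mathcal{M}(\{0,1\}^L)$ such that $P_{\mathrm{acc}} = \int P_\Lambda\,\mu(d\Lambda)$. Here $\mu$ is the law of the empirical-frequency limit $\Lambda = \lim_n n^{-1}\sum_{i\le n}\delta_{V(w_i)}$, which exists a.s.\ by the exchangeable strong law. The representation does not depend on the enumeration, because $P_\Lambda$ is a product measure and is itself permutation invariant.

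\textbf{Step 3: consistency with Theorem~\ref{thm:general}.} Optionally, identify $\Lambda$ with $\Lambda_O$ from part 3 of Theorem~\ref{thm:general}, and $\mu$ with the pushforward of $P_\Xi$ under $\xi \mapsto \Lambda_O$. This also exhibits the rigidity of $\Lambda$.

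For the parenthetical remark about finite $\Acc(w_0)$, Step 1 still yields finite exchangeability. However, Hewitt--Savage fails there, and only the Diaconis--Freedman approximation~\citep{Diaconis1977} is available, which explains the caveat in the statement.
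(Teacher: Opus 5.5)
Your proof is correct, and it takes a different route from the paper.

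The paper goes through the ergodic decomposition of Theorem~\ref{thm:general}. It disintegrates $P$ over $\FG$ and uses Lemma~\ref{lem:exch} to get exchangeability of the conditional law of $\widetilde{V}$ given $\Xi$. It then applies Hewitt--Savage fiberwise to obtain $\Lambda(\Xi)$, and defines $\mu$ as the law of $\Lambda(\Xi)$. You instead prove that the unconditional marginal $P_{\mathrm{acc}}$ is already exchangeable and invoke Hewitt--Savage once. So no disintegration, regular conditional probabilities, or ergodicity of fibers enter; only $G$-invariance of $P$ and (Ext) on finite sets are used.

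Your Step~1 also makes explicit a point the paper passes over. Lemma~\ref{lem:exch} asserts that (Ext) yields $\pi \in G$ with $\pi|_O = \sigma$, but (Ext) only prescribes $\pi$ on finitely many worlds. One really needs your device of realizing $\sigma$ on each finite $T \supseteq S$ and then extending via the $\pi$-system of cylinder sets. The paper's route needs the same argument, fiber by fiber.

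What the paper's route buys is uniformity with the general theory. The directing measure is exhibited as a function of the common latent invariant $\Xi$, hence $\FG$-measurable. Conditioning on a single $\Xi$ is also what keeps several orbits in one joint representation in the $\mathsf{S4}$ case.

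For a single orbit you lose nothing by going direct, and you do not need Step~3 to get rigidity. Under each product measure $P_\Lambda$ the strong law holds along every enumeration of $\Acc(w_0)$, in particular along $(\pi^{-1}(w_n))_{n\ge 1}$. Hence your empirical-limit $\Lambda$ already satisfies $\Lambda(\pi\cdot V) = \Lambda(V)$ a.s.\ for each $\pi \in G$.
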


\begin{proof}
Under point-homogeneity, $\Acc(w_0)$ is a single $G$-orbit. Countable infinitude plus (Ext) lets us apply Theorem~\ref{thm:general}(3): the conditional distribution of $\{V(w)\}_{w \in \Acc(w_0)}$ given $\Xi$ is exchangeable, and Hewitt--Savage yields a directing measure $\Lambda(\Xi)$. Defining $\mu$ as the distribution of $\Lambda(\Xi)$ gives the representation for $P_{\mathrm{acc}}$.
\end{proof}

The $\mathsf{S5}$ case is not merely the setting in which de Finetti's theorem reappears with a new label. It is the case of \emph{global epistemic homogeneity}: a single directing measure means that all non-base accessible worlds are governed by one shared parameter, and observations at any such world update the same parameter. Learning pools completely across $\Acc(w_0)$, and the agent's posterior converges---under standard Bayesian consistency conditions---to the true directing law from data gathered anywhere in the accessible cluster.\footnote{Since the proofs use only disintegration on standard Borel spaces~\citep[Theorem 452I]{Fremlin2006} and the Hewitt--Savage theorem~\citep{Hewitt1955}, the entire argument goes through in ZF + DC.}

\subsection{S4: local de Finetti with rigid directing measures}

\begin{theorem}[Modal de Finetti: S4 / Orbit-wise decomposition]
\label{thm:S4}
Let $\langle W, R \rangle$ be an $\mathsf{S4}$ frame and $P$ a modally exchangeable probability measure on $\Omega$. Let $\Orbit = \{O_1, O_2, \ldots\}$ denote the $G$-orbits in $\Acc(w_0)$. Let $\Orbit^\infty \subseteq \Orbit$ be the set of countably infinite orbits for which the restricted action $G|_O$ has property \textup{(Ext)}.

Then there exist random probability measures $\{\Lambda_O : O \in \Orbit^\infty\}$ on $\{0,1\}^L$ such that:
\begin{enumerate}[leftmargin=2em]
\item For each $O \in \Orbit^\infty$, conditional on $\Lambda_O$, the valuations $\{V(w)\}_{w \in O}$ are i.i.d.\ with law $\Lambda_O$ (exactly for infinite orbits; the finite-orbit case yields a finite-exchangeability approximation).
\item For orbits outside $\Orbit^\infty$ (finite orbits, or orbits on which (Ext) fails), $G$-invariance yields identical conditional marginals but not necessarily conditional i.i.d.
\item $G$-invariance does not force independence across orbits; however, additional structure in the group action may impose constraints on joint distributions across orbits.
\item For each $O \in \Orbit^\infty$, the directing measure $\Lambda_O$ is rigid: $\FG$-measurable, constant across $O$.
\end{enumerate}
\end{theorem}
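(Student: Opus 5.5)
The plan is to derive Theorem~\ref{thm:S4} as a specialization of Theorem~\ref{thm:general}. Nothing in that theorem uses reflexivity or transitivity of $R$, so every item should follow from the corresponding part of the general result once the conditioning variable is handled correctly. The first step is to fix the ergodic decomposition from Theorem~\ref{thm:general}(1): a $\FG$-measurable latent variable $\Xi$ with $G$-ergodic fibers. The orbits $O_1, O_2, \ldots$ of $G$ acting on $\Acc(w_0)$ are then exactly the orbits used in that theorem, and $\Orbit^\infty$ is defined identically.

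For item 1, take $O \in \Orbit^\infty$. By (Ext), for any finite permutation $\sigma$ of $O$ there is some $\pi \in G$ agreeing with $\sigma$ on the relevant finite support. This is obtained by applying Definition~\ref{def:ext} with singleton blocks $F_i = \{v_i\}$ and $F'_i = \{\sigma(v_i)\}$. Because $\Xi$ is $G$-invariant, the conditional law of $(V(w))_{w \in O}$ given $\Xi$ is invariant under all finite permutations of $O$, so it is exchangeable. Since $|O| = \aleph_0$, the Hewitt--Savage theorem gives a directing measure $\Lambda_O$, realized as the a.s.\ limit of empirical measures along any enumeration of $O$, such that the family is conditionally i.i.d.\ with law $\Lambda_O$ given $\Xi$. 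The statement conditions on $\Lambda_O$ rather than on $\Xi$, so I still need to pass from one to the other. Conditional i.i.d.\ given $\Xi$ with a law $\Lambda_O$ that is $\sigma(\Xi)$-measurable, together with the tower property over $\sigma(\Lambda_O) \subseteq \sigma(\Xi)$, gives conditional i.i.d.\ given $\Lambda_O$. The finite-orbit parenthetical follows by quoting the Diaconis--Freedman finite-exchangeability bound.

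Item 2 is exactly Theorem~\ref{thm:general}(2). I will also record Example~\ref{ex:ext-failure-preview} as a witness that conditional i.i.d.\ can genuinely fail off $\Orbit^\infty$, since that example is an $\mathsf{S4}$ orbit. For item 3, I will cite Theorem~\ref{thm:general}(4). To make "not forced" concrete, I will use the running two-region frame and a mixture $P$ in which $\Lambda_{O_a}$ and the $b$-region law are driven by a common random parameter. This $P$ is $G$-invariant but the two regions are dependent, which shows that $G$-invariance imposes no independence. For item 4, the empirical-measure limit is unchanged when $V$ is replaced by $\pi \cdot V$ for $\pi \in G$: $\pi$ maps $O$ to itself, and by exchangeability the limit does not depend on the chosen enumeration. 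Hence $\Lambda_O$ is $\FG$-measurable modulo null sets, which is Theorem~\ref{thm:general}(5).

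The step I expect to be the main obstacle is the rigidity claim in item 4. The limit is only a.s.\ independent of the enumeration, and $\pi$ may permute $O$ infinitely. I would handle this by defining $\Lambda_O$ in an enumeration-free way, as the limit of empirical frequencies restricted to the exchangeable tail $\sigma$-field of $O$. Then $\Lambda_O$ equals $P$-a.s.\ the conditional law of $V(w)$ given the exchangeable $\sigma$-field, for any single $w \in O$. That characterization is manifestly preserved by each $\pi \in G$, because $\pi$ maps the exchangeable $\sigma$-field of $O$ onto itself.
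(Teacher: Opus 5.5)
Your proposal is correct and takes the same route as the paper, whose proof is just the observation that Theorem~\ref{thm:general} holds for every countable frame: part (2) covers all orbits and parts (3)--(5) cover those in $\Orbit^\infty$. Your extra detail (singleton blocks in (Ext), the tower-property passage from $\Xi$ to $\Lambda_O$, the exchangeable-$\sigma$-field argument for rigidity, and explicit witnesses for items 2 and 3) fills in what the paper leaves implicit. One ordering point: the ``i.i.d.\ given $\Xi$'' step in item 1 presupposes that $\Lambda_O$ is $\sigma(\Xi)$-measurable, which is exactly what your item-4 invariance argument supplies, so item 4 should come first. Alternatively, you can bypass $\Xi$ entirely, since item 1 as stated (conditioning only on $\Lambda_O$) follows from de Finetti applied directly to $P$, which is exchangeable on $O$ by $G$-invariance and (Ext).
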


The $\mathsf{S4}$ case changes the geometry of learning. Orbits in $\Orbit^\infty$ each carry a rigid directing measure, and observations there update only that orbit's parameter---by symmetry alone---without informing any other orbit. Orbits outside $\Orbit^\infty$ receive only identical conditional marginals and, where transitive but order-preserving group actions are involved, a stationary conditional law. Cross-orbit learning can occur through hierarchical prior coupling or through richer group-action structure, but such coupling is a modeling choice, not a direct consequence of modal exchangeability.

\begin{proof}
Immediate from Theorem~\ref{thm:general}: parts (2) applies to every orbit, while parts (3)--(5) apply to orbits in $\Orbit^\infty$.
\end{proof}

Under $\mathsf{S4}$, directed accessibility may fragment $\Acc(w_0)$ into multiple orbits (depending on frame structure). Those in $\Orbit^\infty$ support their own rigid directing measures; others support weaker invariant structure. $G$-invariance does not relate the directing measures of different orbits in $\Orbit^\infty$. Observations within such an orbit update a single shared parameter; observations across orbits do not pool unless the agent's prior or additional group-action structure introduces coupling.

\subsection{Weak modal logics and the limits of logic-sensitive credence}

$\mathsf{T}$ and $\mathsf{K}$ mark the boundary where modal axioms become too weak to force a canonical orbit structure.

\begin{proposition}[T frames]
\label{prop:T}
Let $\langle W, R \rangle$ be a countable $\mathsf{T}$-frame and $P$ a modally exchangeable measure. The general representation \textup{(Theorem~\ref{thm:general})} applies. However, the class of $\mathsf{T}$-frames does not determine a canonical orbit structure: the orbit decomposition of $\Acc(w_0)$ is frame-dependent.
\end{proposition}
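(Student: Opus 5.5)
The proposition has two parts, and I will handle them separately.

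\emph{First part: Theorem~\ref{thm:general} applies.} A $\mathsf{T}$-frame is a countable modal frame with $R$ reflexive. Theorem~\ref{thm:general} assumes only countability of $W$, a base world $w_0$, and $G$-invariance of $P$. Reflexivity is never used in that proof. The disintegration over $\FG$ needs only that $\Omega$ is standard Borel, and the orbit arguments need only that $G$ permutes $\Acc(w_0)$, which was checked for arbitrary $R$. So this part is a direct observation with no further work.

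\emph{Second part: no canonical orbit structure.} Here the plan is to exhibit countable reflexive frames whose orbit decompositions of $\Acc(w_0)$ differ in kind. The following three frames should suffice.

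\begin{enumerate}[leftmargin=2em]
\item Let $W=\{w_0\}\cup\{a_i:i\in\N\}$, where $R$ is reflexive, $w_0Ra_i$ for all $i$, and there are no other edges. Any bijection of the $a_i$ fixing $w_0$ is an automorphism. So $G|_{\Acc(w_0)}=\Sym(\Acc(w_0))$, giving a single orbit on which (Ext) holds.
\item Take the same frame and add a single extra edge $a_0Ra_1$. The world $a_0$ is the unique non-base world with an outgoing non-loop edge, and $a_1$ is the unique one with an incoming non-loop edge from a non-base world. Hence $\{a_0\}$ and $\{a_1\}$ are singleton orbits, and the remaining $a_i$ form one infinite orbit. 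The decomposition is now non-trivial.
\item Take a rigid reflexive frame, for example the one in \cref{fig:frames} or $\Acc(w_0)=\{a_1,\ldots,a_n\}$ with $a_iRa_j$ iff $i\le j$. This is a finite reflexive linear order, so its only automorphism is the identity, and every orbit is a singleton.
\end{enumerate}

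These examples show that the number of orbits ranges from $1$ to $|\Acc(w_0)|$, and that membership in $\Orbit^\infty$ varies across $\mathsf{T}$-frames. Each frame validates $\mathsf{T}$, since reflexivity is built in. So the logic $\mathsf{T}$ does not fix the orbit structure.

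The main obstacle is making ``canonical'' precise. I will read it as the claim that there is no invariant of the orbit decomposition that is determined by frame-validity of $\mathsf{T}$, such as the number of orbits, their cardinalities, or whether (Ext) holds. Non-isomorphic decompositions across reflexive frames refute every such candidate.

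If one wants a sharper contrast with point-homogeneous $\mathsf{S5}$, one could also note the following. Even the frames from items 1 and 2 that fail transitivity or symmetry realize both a single-orbit structure and a fragmented one. So no axiom in $\mathsf{T}$ plays the role that point-homogeneity plays in Theorem~\ref{thm:S5}.

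The only verification needed is the automorphism computation in each example, namely that edge patterns are preserved. This is routine because the edges are explicit.
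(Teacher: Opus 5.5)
Your proof is correct and follows the paper's own justification: Theorem~\ref{thm:general} uses no property of $R$ (only countability of $W$ and $G$-invariance), and reflexivity places no constraint on the orbit decomposition. The paper asserts the second point without exhibiting witnessing frames, which you supply. One small slip, which does not affect the argument since the proposition needs only that decompositions differ: your frames in items 1 and 2 are in fact transitive (hence $\mathsf{S4}$ frames that fail only symmetry), and your remark that the orbit count ``ranges from $1$ to $|\Acc(w_0)|$'' overstates what the examples show, because the two extremes occur in different frames and you give no infinite rigid example.
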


Reflexivity has epistemic significance---$\Box \varphi \to \varphi$ ensures that what is known is true---but it imposes no systematic constraint on the orbit decomposition of $\Acc(w_0)$. Individual $\mathsf{T}$-frames \emph{can} support interesting credal structure, but such structure is a property of the specific frame, not a consequence of the $\mathsf{T}$ axiom.

\begin{proposition}[K frames]
\label{prop:K}
Let $\langle W, R \rangle$ be a countable $\mathsf{K}$-frame and $P$ a modally exchangeable measure. The general representation \textup{(Theorem~\ref{thm:general})} applies. However, the class of $\mathsf{K}$-frames does not determine a canonical orbit structure: any number of orbits, from one to $|\Acc(w_0)|$, is realizable by some $\mathsf{K}$-frame.
\end{proposition}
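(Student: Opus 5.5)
The statement has two parts, and I would treat them separately. The first part, that Theorem~\ref{thm:general} applies, is immediate. That theorem assumes only a countable frame, a base world $w_0$, and a $G$-invariant $P$. It imposes no condition on $R$, so a countable $\mathsf{K}$-frame qualifies.

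The substantive claim is realizability. For every $n$ with $1 \le n \le |\Acc(w_0)|$, including $n = \aleph_0$ when $\Acc(w_0)$ is countably infinite, I would exhibit a $\mathsf{K}$-frame whose stabilizer $G$ has exactly $n$ orbits on $\Acc(w_0)$. I would fix the target cardinality $\kappa = |\Acc(w_0)|$, with $1 \le \kappa \le \aleph_0$, and build all frames on the world set $W = \{w_0\} \cup A$ with $|A| = \kappa$. I would always keep the edges $R(w_0, a)$ for every $a \in A$, so that $\Acc(w_0) = A$. Since $\mathsf{K}$ imposes no constraints, I can then freely add further edges inside $A$ to label worlds by isomorphism-invariant features.

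\textbf{One orbit.} I would add no edges inside $A$ beyond those from $w_0$. Any permutation of $A$ that fixes $w_0$ preserves $R$, so $G$ restricted to $A$ is the full symmetric group and acts transitively.

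\textbf{$n$ orbits with $n < \kappa$.} I would partition $A$ into nonempty blocks $B_1, \ldots, B_n$. It suffices to choose the sizes so that at most one block is infinite. I would then attach a distinguishing invariant to each block: every world in $B_i$ gets a directed path of length $i$ hanging off it, built from fresh worlds outside $\Acc(w_0)$. Those fresh worlds are not accessed by $w_0$, and in $\mathsf{K}$ we are free not to add such edges.

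This gadget needs two checks.
\begin{enumerate}
\item Worlds in the same block are swapped by an automorphism fixing $w_0$: transpose the two worlds together with their attached paths.
\item No automorphism fixing $w_0$ maps a world of $B_i$ to a world of $B_j$ with $i \ne j$. An automorphism fixing $w_0$ preserves $\Acc(w_0)$ setwise. It also preserves, for each $a \in A$, the length of the longest outgoing $R$-path from $a$ that avoids $\{w_0\} \cup A$, which is $i$ for $a \in B_i$.
\end{enumerate}
So the orbits are exactly the blocks $B_1, \ldots, B_n$.

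\textbf{$\kappa$ orbits (all singletons).} I would use the same gadget with every block a singleton, indexing the worlds of $A$ by distinct positive integers. This handles both finite $\kappa$ and $\kappa = \aleph_0$.

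The step I expect to be the main obstacle is the rigidity check in item 2 above: showing that the invariant really separates blocks. The concern is that path gadgets might accidentally be isomorphic, or might create new accessibility into $A$. To avoid this, I would make the gadget worlds have no outgoing edges except along their own path, and no edges into $A$ or to $w_0$. Then the longest $R$-path from $a \in A$ that leaves $\{w_0\} \cup A$ is clearly a well-defined frame invariant of $a$, and it is preserved by every automorphism fixing $w_0$.

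Finally, I would note that the proposition is about the whole class of $\mathsf{K}$-frames. Realizing every orbit count therefore shows that the $\mathsf{K}$ axioms alone cannot fix a canonical orbit structure, since the same logic admits frames with one orbit and frames with arbitrarily many.
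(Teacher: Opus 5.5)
Your proof is correct. It supplies what the paper omits: the paper gives no argument for this proposition. Its only support is the follow-up remark that the centered symmetry group of a $\mathsf{K}$-frame ``may range from $\Sym(\Acc(w_0))$ \ldots\ to the trivial group.'' That remark asserts the two extremes (one orbit, all singleton orbits) without constructing any frames, and it says nothing about intermediate orbit counts.

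Your block-plus-path construction realizes every $n$ with $1 \le n \le |\Acc(w_0)|$, including $n=\aleph_0$, using a single isomorphism invariant. Your two checks are exactly what is needed:
\begin{itemize}
\item Swapping two worlds of the same block together with their equal-length paths is an automorphism fixing $w_0$.
\item Every $\pi \in G$ preserves $\{w_0\}\cup\Acc(w_0)$ setwise, and hence preserves the longest path leaving that set.
\end{itemize}
In the all-singleton case your frame even has trivial $G$, since each gadget path is then fixed pointwise. This matches the paper's ``trivial group'' endpoint.

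Two cosmetic points:
\begin{itemize}
\item The world set is really $\{w_0\}\cup A$ together with the gadget worlds, which is still countable. It is not $\{w_0\}\cup A$ as you first state.
\item The requirement that at most one block be infinite is unnecessary. The transposition argument works for blocks of any size.
\end{itemize}

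If you preferred to avoid auxiliary worlds, you could set $R(x,y)$ for $x\in B_i$, $y\in B_j$ with $i>j$. The longest $R$-path inside $A$ from a world of $B_i$ then has length $i-1$, which separates the blocks, including in the $\aleph_0$ singleton case. Your gadget version is equally valid and arguably cleaner.
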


The centered symmetry group of a $\mathsf{K}$-frame may range from $\Sym(\Acc(w_0))$ (maximal symmetry, classical exchangeability) to the trivial group (no exchangeability at all). The representation is \emph{frame-relative}: to know the form of admissible credence, one must know the specific frame, not merely that it validates $\mathsf{K}$.

Without at least $\mathsf{S4}$-style structure, modal axioms alone do not constrain credal pooling.

\subsection{Summary: the modal hierarchy}

The following table summarizes the relationship between modal logic, frame condition, orbit structure, and probabilistic consequence.

\begin{center}
\small
\footnotesize
\begin{tabular}{@{}lllllp{3.0cm}@{}}
\toprule
\textbf{Logic} & \textbf{Frame} & \textbf{Orbits} & \textbf{Symmetry} & \textbf{Learning} & \textbf{Probabilistic form} \\
\midrule
$\mathsf{S5}$\textsuperscript{$\dagger$} & Equivalence & Single & Maximal & Pooled & Global de Finetti \\
$\mathsf{S4}$ & Preorder & Multiple (poss.) & Frame-dep. & Orbit-local & Orbit-local de Finetti on $\Orbit^\infty$; weaker elsewhere \\
$\mathsf{T}$ & Reflexive & Frame-rel. & Minimal & Frame-dep. & No canonical form \\
$\mathsf{K}$ & Arbitrary & Frame-rel. & Arbitrary & Frame-dep. & No canonical form \\
\bottomrule
\multicolumn{6}{@{}l}{\textsuperscript{$\dagger$}Point-homogeneous, with (Ext).}
\end{tabular}
\end{center}

\section{Philosophical Interpretation}
\label{sec:philosophy}

We develop two main applications: what the representation implies for rational indifference and learning, and how the orbit structure may bear on fine-grained credal differentiation.

\subsection{Symmetry and rational indifference}

Modal exchangeability is a principle of rational indifference adapted to structured logical space. Classical indifference principles assign equal probability to ``structurally similar'' outcomes. In a flat setting, structural similarity means permutation equivalence, and indifference yields exchangeability. In the modal setting, structural similarity means equivalence under accessibility-preserving automorphisms that fix the base world. The representation theorems show that modal indifference yields within-orbit identical conditional marginals in general, and orbit-wise mixture-of-i.i.d.\ structure only on those orbits satisfying (Ext) and countable infinitude.

This is not an unrestricted principle of indifference of the kind that faces classical objections (Bertrand's paradox, language-dependence). It is a principle adapted to a \emph{given} structure: the frame determines the symmetry group, and the symmetry group determines the scope of indifference. There is no free choice of partition or measure of similarity; the accessibility relation and the base world fix both.

The strength of the representation depends on the strength of the indifference principle, which in turn depends on the size of the stabilizer. Under point-homogeneous $\mathsf{S5}$, the stabilizer is maximal, and (with (Ext) and countable infinitude) indifference yields a single global parameter. Under $\mathsf{S4}$, the stabilizer is smaller; the representation may fragment, and orbits outside $\Orbit^\infty$ yield only weaker invariant structure. Under $\mathsf{K}$ or $\mathsf{T}$, the stabilizer may be trivial and indifference imposes almost nothing. The modal hierarchy is thus also a hierarchy of rational indifference.

\subsection{Local versus global learning}

For orbits in $\Orbit^\infty$, the orbit structure governs what an agent can learn from observation. Observations at worlds within such an orbit $O$ update the directing measure $\Lambda_O$ but are uninformative about $\Lambda_{O'}$ for $O' \in \Orbit^\infty$ with $O' \neq O$ unless the agent's prior couples them. Rigidity ensures that learning from any world $w \in O$ is equivalent to learning from any other $w' \in O$. Cross-orbit transfer requires additional structure---either in the agent's prior or in the group action.

Under point-homogeneous $\mathsf{S5}$ with (Ext) and countable infinitude (single orbit in $\Orbit^\infty$), all observations pool: every piece of evidence bears on the same global parameter, and the agent's posterior concentrates as data accumulates (under standard regularity conditions for Bayesian consistency). Under $\mathsf{S4}$ with multiple orbits in $\Orbit^\infty$, learning is orbit-local by default. Hierarchical priors coupling orbit parameters along the accessibility relation can encode directed information transfer, but this is a modeling choice orthogonal to the representation theorem.

The conceptual separation: \emph{what symmetry yields}---within-orbit rigidity, identical conditional marginals, and (under (Ext) and countable infinitude) conditional i.i.d.\ given the directing measure; \emph{what prior modeling choices add}---cross-orbit coupling, hierarchical parameter structure, and the scope of information transfer.

A note on dynamics is in order. Conditioning on observations at specific worlds in general \emph{breaks} invariance under automorphisms that move those worlds, so the posterior is not automatically modally exchangeable at $w_0$. Modal exchangeability is preserved only when conditioning on $G$-invariant evidence, or---more flexibly---only relative to the stabilizer subgroup $G' = \Stab_{\{w_0\} \cup W_{\mathrm{obs}}}$ that fixes both $w_0$ and the set of observed worlds. Under this relative symmetry, the posterior admits an orbit-wise representation with respect to $G'$, where the $G'$-orbits are typically finer than the original $G$-orbits. A fully developed dynamic theory, connecting to relative exchangeability and to exchangeability-preserving update rules in dynamic epistemic logic, lies beyond the scope of this paper.

Two formal consequences make the learning structure precise:

\begin{corollary}[Orbit-local updating]
\label{cor:orbit-local}
Let $P$ be modally exchangeable and let $E \in \mathcal{F}$ be an event that is \emph{orbit-measurable} for orbit $O$---that is, $E$ depends only on valuations at worlds in $O$. If the prior factorizes across orbits conditional on $\Xi$ \textup{(}i.e., $P(\cdot \mid \Xi) = \bigotimes_{O' \in \Orbit} P_{O'}(\cdot \mid \Xi)$\textup{)}, then the posterior for every other orbit is unchanged:
\[
P(\{V(w)\}_{w \in O'} \in \cdot \mid E, \, \Xi) = P(\{V(w)\}_{w \in O'} \in \cdot \mid \Xi) \quad \text{for all } O' \neq O.
\]
\end{corollary}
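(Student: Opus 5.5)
The statement is elementary conditional probability: under the factorization hypothesis, the orbit blocks are conditionally independent given $\Xi$. An event that depends only on the $O$-block should therefore carry no information about any other block once we condition on $\Xi$. I would not use the representation machinery of Theorem~\ref{thm:general} beyond the existence of $\Xi$ and the regular conditional law $P(\cdot\mid\Xi)$ supplied by the disintegration in part~1. Modal exchangeability is needed only to guarantee that $\Xi$ and these fibers exist.

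\textbf{Setup.} Fix $O'\neq O$ and a measurable set $B$ in the valuation space of $O'$. Write $X_{O'}=\{V(w)\}_{w\in O'}$, and write $\mathcal{F}_O=\sigma(V(w):w\in O)$, so that $E\in\mathcal{F}_O$. The factorization hypothesis says that the regular conditional law $P(\cdot\mid\Xi=\xi)$ is a product over the orbit coordinates, for $P_\Xi$-a.e.\ $\xi$. Since $O$ and $O'$ are disjoint, this gives
\[
P(E\cap\{X_{O'}\in B\}\mid\Xi)=P(E\mid\Xi)\,P(X_{O'}\in B\mid\Xi)\quad\text{a.s.}
\]
A small technical point arises here. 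The factorization is stated over $\Orbit$, which only covers $\Acc(w_0)$. Coordinates at $w_0$ and at inaccessible worlds are not involved, because both $E$ and $\{X_{O'}\in B\}$ are measurable with respect to the orbit coordinates alone. So I only need the product structure of the marginal on $\mathcal{F}_O\vee\mathcal{F}_{O'}$, which follows by projecting the product.

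\textbf{Conditioning.} Interpret $P(\cdot\mid E,\Xi)$ as conditioning on the $\sigma$-field $\sigma(\Xi)\vee\sigma(E)$. On the set $\{P(E\mid\Xi)>0\}$, the elementary formula gives
\[
P(X_{O'}\in B\mid E,\Xi)=\frac{P(E\cap\{X_{O'}\in B\}\mid\Xi)}{P(E\mid\Xi)}=P(X_{O'}\in B\mid\Xi).
\]
To make this rigorous, I would verify the defining identity of conditional expectation on the generators $\{\Xi\in C\}\cap E$ and $\{\Xi\in C\}\cap E^c$ of $\sigma(\Xi)\vee\sigma(E)$. Each case reduces, via the tower property, to the product identity above. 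For $E^c$, use $P(E^c\cap\{X_{O'}\in B\}\mid\Xi)=P(E^c\mid\Xi)P(X_{O'}\in B\mid\Xi)$, obtained by subtraction. Finally, since the valuation space of $O'$ is standard Borel and $B$ ranges over a countable $\pi$-system generating the cylinder sets, the a.s.\ equalities hold simultaneously. They then extend to equality of the conditional laws by a monotone class argument.

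\textbf{Main obstacle.} The only real difficulty is measure-theoretic hygiene, not probability. Two things must be handled: null sets where $P(E\mid\Xi)=0$, where the conditional law given $E$ is defined arbitrarily, and the exact meaning of "posterior given $E$ and $\Xi$". The plan is to state the result as an a.s.\ identity of regular conditional distributions given $\sigma(\Xi)\vee\sigma(E)$, on the event $\{P(E\mid\Xi)>0\}$. The generator check above then proves it.
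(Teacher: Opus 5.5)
Your proposal is correct and follows the same route as the paper. The factorization hypothesis makes the $O$- and $O'$-blocks conditionally independent given $\Xi$, so conditioning additionally on the $O$-measurable event $E$ leaves the conditional law of $\{V(w)\}_{w \in O'}$ unchanged; the paper states this in two lines, and your generator check on $\sigma(\Xi)\vee\sigma(E)$, together with the handling of the null set $\{P(E\mid\Xi)=0\}$, just makes that argument rigorous.
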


\begin{proof}
Under the factorization assumption, the conditional distribution of $\{V(w)\}_{w \in O'}$ given $\Xi$ is independent of $\{V(w)\}_{w \in O}$. Since $E$ is $O$-measurable, conditioning on $E$ does not update the $O'$-marginal.
\end{proof}

\begin{corollary}[Symmetry-induced pooling]
\label{cor:pooling}
Symmetry alone yields universal pooling---a single directing measure governing all non-base accessible worlds---if and only if the centered symmetry group $G$ acts transitively on $\Acc(w_0)$, satisfies \textup{(Ext)}, and $\Acc(w_0)$ is countably infinite. In that case there is a single orbit lying in $\Orbit^\infty$, and the representation reduces to the point-homogeneous $\mathsf{S5}$ case \textup{(Theorem~\ref{thm:S5})}.

When multiple orbits exist, modal exchangeability is consistent with factorized priors that yield no cross-orbit pooling \textup{(Corollary~\ref{cor:orbit-local})}. Cross-orbit pooling can still occur through hierarchical prior coupling of the orbit-specific directing measures, but such coupling is an additional modeling choice, not a consequence of symmetry.
\end{corollary}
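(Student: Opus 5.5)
The statement has two halves: a biconditional characterizing universal pooling, and a consistency claim for the multi-orbit case. I will prove them in that order.

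\textbf{The ``if'' direction.} Transitivity of $G$ on $\Acc(w_0)$ means $\Acc(w_0)$ is a single $G$-orbit $O$. If in addition (Ext) holds on $O$ and $|O|=\aleph_0$, then $O\in\Orbit^\infty$. Theorem~\ref{thm:general}(3) then gives a random measure $\Lambda_O$ such that, conditional on $\Xi$, the family $\{V(w)\}_{w\in\Acc(w_0)}$ is i.i.d.\ with law $\Lambda_O$. Part (5) makes $\Lambda_O$ rigid, and integrating out $\Xi$ reproduces the mixture of Theorem~\ref{thm:S5}. This is the claimed reduction. No $\mathsf{S5}$ hypothesis is needed here, since that proof used only transitivity, (Ext) and countable infinitude.

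\textbf{The ``only if'' direction.} This is the substantive half, and the phrase ``symmetry alone yields'' has to be read as a quantifier: \emph{every} $G$-invariant $P$ makes $\{V(w)\}_{w\in\Acc(w_0)}$ conditionally i.i.d.\ given a single directing measure. I will argue the contrapositive by building a counterexample measure for each way the hypotheses can fail.

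\emph{(a) $G$ is not transitive.} Pick two distinct orbits $O\ne O'$. Let $P$ be a point mass at the valuation that is $1$ on the first atom over $O$ and $0$ elsewhere. This valuation is constant on orbits, so $P$ is $G$-invariant. But $V(v)$ and $V(v')$ have different laws for $v\in O$, $v'\in O'$, so there is no common directing law.

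\emph{(b) Transitive, but (Ext) fails.} Failure of (Ext) gives finite tuples $(F_i)$ and $(F_i')$ that no $\pi\in G$ maps onto each other. Take $P$ to be the uniform measure over the $G$-orbit of an indicator of $F_1$, averaged over $G$ in some invariant way. This is the delicate point, since $G$ may be non-compact. I would instead use Example~\ref{ex:ext-failure-preview}: a shift-stationary but non-exchangeable law, such as a stationary Markov chain along $\Z$. This law is $G$-invariant and ergodic, so $\Xi$ is trivial, yet the law is not i.i.d. In general, I would invoke the ``statistical homogeneity vs.\ marginal indistinguishability'' boundary of Proposition~\ref{prop:ext-role}, and exhibit an invariant ergodic measure whose finite-dimensional marginals on $F_1$ and $F_1'$ differ. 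Such a measure rules out conditional exchangeability and hence conditional i.i.d.

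\emph{(c) Transitive with (Ext), but $\Acc(w_0)$ finite.} Take $P$ uniform over valuations with exactly one world satisfying the first atom. This measure is $G$-invariant (indeed fully exchangeable) but is not a mixture of i.i.d.\ laws, since the number of worlds satisfying the first atom is a.s.\ $1$.

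The main obstacle is case (b): constructing, for an arbitrary transitive non-(Ext) action, an invariant measure that fails conditional i.i.d. If no general construction works, I would weaken the claim to the form the paper actually uses, namely that Theorem~\ref{thm:general} yields a single directing measure exactly under these conditions, and cite the linear-order example as the witness that the conditions are sharp.

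\textbf{The multi-orbit consistency claim.} When there are at least two orbits, take any product of orbit-wise $G$-invariant laws, for instance $\bigotimes_{O}\mathrm{Bernoulli}(\theta_O)^{\otimes O}$ with distinct $\theta_O$. A product of this form is $G$-invariant because every $\pi\in G$ maps each orbit to itself. It factorizes across orbits as Corollary~\ref{cor:orbit-local} requires, so that corollary gives no cross-orbit updating. Finally, to show that hierarchical coupling remains available, mix the same construction over a common random $\theta$. This mixture is still $G$-invariant, so cross-orbit pooling is compatible with symmetry but not forced by it.
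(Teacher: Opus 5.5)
\textbf{The gap is case (b), and it cannot be closed: under your quantifier reading, which is the natural one, the ``only if'' is false there.} Consider the following counterexample.
Let $W=\{w_0\}\cup\{q_r : r\in\mathbb{Q}\}$, with $w_0$ reflexive and accessing every world, and $R(q_r,q_s)$ iff $r\le s$. This is a countable $\mathsf{S4}$ frame. Since $w_0$ is the only world accessing all worlds, $G$ is the full automorphism group, and it acts on $\Acc(w_0)$ as $\Aut(\mathbb{Q},\le)$. That action is transitive, and $\Acc(w_0)$ is countably infinite. But (Ext) fails, since no order automorphism swaps $q_0$ and $q_1$. Note that (Ext) demands every bijection between finite sets, not just every partial isomorphism, so it fails even though $(\mathbb{Q},<)$ is ultrahomogeneous.

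Ultrahomogeneity does make any two increasing $n$-tuples $G$-equivalent. So for any $t_1<t_2<\cdots$ in $\mathbb{Q}$, the sequence $(V(q_{t_k}))_k$ is contractable under every $G$-invariant $P$. By Ryll-Nardzewski's theorem (contractable infinite sequences are exchangeable), the common law of increasing $n$-tuples is permutation-symmetric. Hence $\{V(w)\}_{w\in\Acc(w_0)}$ is exchangeable and has a single rigid de Finetti directing measure. Every modally exchangeable $P$ therefore pools universally, although (Ext) fails.

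Your intended witness, an invariant law separating $(F_i)$ from $(F'_i)$, cannot exist here, because $(V(q_0),V(q_1))$ and $(V(q_1),V(q_0))$ are forced to have the same law. The $\Z$-indexed Markov chain shows only that failure of (Ext) \emph{can} block pooling, not that it must, so it does not show the conditions are sharp. The paper's own proof never meets this problem, because its converse treats only the multi-orbit case and says nothing about (Ext) or cardinality. The biconditional survives in one of two forms. One is your fallback (``Theorem~\ref{thm:general} delivers a single directing measure iff \ldots''), which is nearly definitional, since that theorem produces $\Lambda_O$ exactly on $\Orbit^\infty$. The other replaces (Ext) in the ``only if'' by the condition that every $G$-invariant law on $\Acc(w_0)$ be exchangeable.

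\textbf{The rest is sound, and in places sharper than the paper.} Your ``if'' direction is the paper's argument. Your case (a) improves on the paper's converse: the orbit-constant point mass gives different marginals on two orbits and so directly excludes a common directing law, whereas the paper only observes that factorized priors are admissible.

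Case (c) needs $|\Acc(w_0)|\ge 2$, because with one accessible world pooling is trivial; the stated biconditional needs this degenerate exclusion too. You should also fix the valuation off $\Acc(w_0)$, e.g.\ to $0$, so that the uniform measure is invariant. The paper does not treat the cardinality clause at all.

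In the multi-orbit part you invoke Corollary~\ref{cor:orbit-local}, whose hypothesis is factorization \emph{conditional on} $\Xi$. An unconditional product need not factorize given $\Xi$ when finite orbits occur. For example, let $G=\{\mathrm{id},\sigma\}$ with $\sigma$ simultaneously swapping the points of two $2$-element orbits. The ergodic components of a Bernoulli product are then the uniform laws on $\sigma$-orbits of configurations, and some of them force the two orbits' values to agree. You do not need that corollary, though: unconditional independence already shows that evidence on $O$ leaves the law on $O'$ unchanged.
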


\begin{proof}
If $G$ acts transitively with (Ext), $\Acc(w_0)$ is a single orbit and Theorem~\ref{thm:S5} applies: all evidence updates the unique directing measure. Conversely, if there are multiple orbits, the factorized prior of Corollary~\ref{cor:orbit-local} is consistent with modal exchangeability and yields no cross-orbit pooling; any pooling that does occur is mediated by the joint prior over $\{\Lambda_O\}$, which symmetry does not determine.
\end{proof}

\subsection{Rigidity and probabilistic fine-graining}
\label{subsec:rigidity}

Rigidity is the central structural consequence of modal exchangeability on orbits in $\Orbit^\infty$. It asserts that for each such orbit, the directing measure is constant across that orbit: $\Lambda_O$ takes the same value at every world in $O$. The derivation is short. The $G$-invariance of $P$ is the sole premise. The directing measure $\Lambda_O$ is determined by the distribution of $\{V(w)\}_{w \in O}$. For any $\pi \in G$ with $\pi(O) = O$, $G$-invariance implies $\text{Law}(\{V(\pi(w))\}_{w \in O}) = \text{Law}(\{V(w)\}_{w \in O})$, so $\Lambda_O$ is the same whether computed from $\{V(w)\}$ or from $\{V(\pi(w))\}$. Formally, $\Lambda_O$ is $\FG$-measurable: $\Lambda_O(\pi \cdot V) = \Lambda_O(V)$ $P$-a.s.\ for all $\pi \in G$.

What rigidity rules out is world-relative parameterization within a single orbit in $\Orbit^\infty$. Without the invariance requirement, one could assign a different directing measure $\Lambda_w$ for each $w \in O$. Rigidity eliminates this: structurally indistinguishable worlds must share a common governing parameter. Across orbits in $\Orbit^\infty$, parameters \emph{can} differ; for orbits outside $\Orbit^\infty$, no directing measure is delivered at all.

A critical clarification: rigidity is orbit-relative, not global. It means that on orbits in $\Orbit^\infty$, the directing measure is invariant under the stabilizer's action within its orbit---not that it is constant across all worlds in the frame or across distinct orbits.

\subsection{Orbit structure and structural credal fine-graining}
\label{subsec:hyperintensionality}

The framework supports \emph{structural fine-graining of credal attitudes indexed by orbit regions}. This yields differentiation across structurally distinct regions of the frame, rather than hyperintensionality in the strict sense of distinguishing coextensive propositions. A brief orientation to the broader hyperintensionality literature appears in \citet{Berto2022}; the present framework engages that literature obliquely, through a structural mechanism rather than a semantic one.

The mechanism is the following. Distinct orbits in $\Orbit^\infty$ carry separately parameterized rigid directing measures $\Lambda_{O_1}, \Lambda_{O_2}$. Propositions or inquiry-targets indexed to different orbits---rather than evaluated purely extensionally over $\Acc(w_0)$ as a whole---can therefore receive divergent credal treatment. An agent evaluating a proposition about worlds in $O_1$ and a structurally parallel proposition about worlds in $O_2$ can rationally hold different credences, since $G$-invariance imposes no constraint relating $\Lambda_{O_1}$ and $\Lambda_{O_2}$. For orbits outside $\Orbit^\infty$, only weaker invariant structure (identical conditional marginals, possibly stationarity) supports any fine-graining. The indexing to orbits is what makes the differentiation possible; a single proposition that picks out the same set of worlds across $\Acc(w_0)$ still receives a single credence.

This is structural credal fine-graining rather than hyperintensionality in the strict sense: the orbit decomposition does not by itself distinguish propositions that are literally coextensive at the level of world-sets. It is a complementary mechanism, not a replacement for approaches that enrich the semantic objects.

\begin{proposition}[Orbit-sensitive credal differentiation]
\label{prop:hyperintensional}
Let $O_1, O_2 \in \Orbit^\infty$ be distinct orbits in $\Acc(w_0)$. Structurally parallel claims or inquiry-targets indexed to $O_1$ and $O_2$ can rationally receive different credal treatment, captured by separate rigid parameters $\Lambda_{O_1}$ and $\Lambda_{O_2}$ whose joint distribution is not determined by $G$-invariance.
\end{proposition}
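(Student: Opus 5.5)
The plan is to prove the proposition by exhibiting a witness: a modally exchangeable measure under which the directing measures of $O_1$ and $O_2$ differ with positive probability. Exhibiting several such measures with different joint laws of $(\Lambda_{O_1},\Lambda_{O_2})$ then shows that $G$-invariance does not fix that joint law. Theorem~\ref{thm:general}(3),(5) already supplies the rigid parameters $\Lambda_{O_1},\Lambda_{O_2}$ for any modally exchangeable $P$. What remains is a consistency claim, namely that no $G$-invariant constraint forces $\Lambda_{O_1}=\Lambda_{O_2}$ or any fixed coupling between them.

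\emph{Construction.} Fix an arbitrary probability measure $\nu$ on $\mathcal{M}(\{0,1\}^L)^2$ and draw $(\lambda_1,\lambda_2)\sim\nu$. Conditional on this draw, let $\{V(w)\}_{w\in O_1}$ be i.i.d.\ $\lambda_1$ and $\{V(w)\}_{w\in O_2}$ be i.i.d.\ $\lambda_2$. Every other world, including $w_0$, every other orbit, and every world outside $\Acc(w_0)$, receives a fixed constant valuation. Call the resulting measure $P_\nu$.

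\emph{Invariance check.} Every $\pi\in G$ fixes $w_0$ and maps each $G$-orbit onto itself. Hence it permutes $O_1$ and $O_2$ internally and permutes the set of remaining worlds. An i.i.d.\ product over a set is invariant under any permutation of that set, and so is a constant assignment. Taking the mixture over $\nu$ preserves invariance, so $P_\nu$ is $G$-invariant.

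\emph{Identification of the directing measures.} By the strong law applied within each orbit, the empirical measures along an enumeration of $O_i$ converge a.s.\ to $\lambda_i$. Hewitt--Savage uniqueness of the directing measure then gives $\Lambda_{O_i}=\lambda_i$ a.s. It follows that the joint law of $(\Lambda_{O_1},\Lambda_{O_2})$ under $P_\nu$ is exactly $\nu$. Taking $\nu$ to be, for example, the product $\delta_{\lambda}\otimes\delta_{\lambda'}$ with $\lambda\neq\lambda'$ gives divergent credences for structurally parallel orbit-indexed claims, say ``$\ell_1$ holds at a world of $O_i$''. Taking $\nu$ to be a diagonal measure instead gives coupled credences. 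Since both choices are modally exchangeable, the joint distribution is not determined by $G$-invariance.

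\emph{Main obstacle.} The delicate step is the invariance check. It relies on the fact that elements of $G$ cannot move $O_1$ into $O_2$. This holds by the definition of orbits, but one must check that it is not circular: the orbits are defined using $G$ itself, so $\pi(O)=O$ for every $\pi\in G$ and every orbit $O$ is immediate. A secondary point is that rigidity, i.e.\ $\FG$-measurability of $\Lambda_{O_i}$, holds under $P_\nu$. This already follows from Theorem~\ref{thm:general}(5) once invariance is established, and it can also be seen directly, because the limiting empirical measures are unchanged by the finite permutations induced on each orbit.
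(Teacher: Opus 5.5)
Your proof is correct, and it does more than the paper's proof. The paper argues only by non-obstruction. Since no $\pi\in G$ carries $O_1$ to $O_2$, it concludes that $G$-invariance imposes no relation between $\Lambda_{O_1}$ and $\Lambda_{O_2}$, and that the joint prior ``may assign arbitrary marginals.'' It never exhibits an invariant measure.

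You instead build, for an arbitrary law $\nu$ on pairs of measures, an explicit $G$-invariant $P_\nu$. You then identify its directing measures using the strong law and a.s.\ uniqueness of the directing measure as the limit of empirical frequencies along an enumeration of the orbit. The paper's observation is exactly what places the action of $G$ on $O_1\cup O_2$ inside $\Sym(O_1)\times\Sym(O_2)$. Your construction turns that observation into a proof: $P_\nu$ is invariant under every permutation of $W$ preserving $O_1$ and $O_2$ setwise, whatever subgroup $G$ actually induces (full product, diagonal, or otherwise). You gain a genuine existence argument and a stronger conclusion: every joint law of $(\Lambda_{O_1},\Lambda_{O_2})$, not merely every pair of marginals, is compatible with modal exchangeability. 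The paper's version is shorter but essentially heuristic.

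Three small tightenings are worth making.
\begin{enumerate}
\item State explicitly that all remaining worlds receive one constant $c$. Distinct constants within a single $G$-orbit would break invariance.
\item Your ``direct'' rigidity remark should not rely on finite permutations, because elements of $G$ may act on $O_i$ as infinite permutations (all of $\Sym(O_a)$ in Example~\ref{ex:S4}). Instead, let $(o_k)_k$ enumerate $O_i$. For each fixed $\pi\in G$, the family $\{V(\pi^{-1}(o_k))\}_{k}$ is again conditionally i.i.d.\ $\lambda_i$, so its empirical measures also converge to $\lambda_i$ a.s. This gives $\Lambda_{O_i}(\pi\cdot V)=\Lambda_{O_i}(V)$ a.s.
\item In the divergent-credence illustration, choose $\lambda,\lambda'$ that differ in the probability of $\ell_1$, not merely $\lambda\neq\lambda'$. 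Also read the claim as ``$\ell_1$ holds at a given world of $O_i$.'' If it means ``at some world'' of an infinite orbit, both credences are typically $1$.
\end{enumerate}
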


\begin{proof}
By Theorem~\ref{thm:S4}, no element of $G$ maps $O_1$ to $O_2$, so $G$-invariance imposes no constraint relating $\Lambda_{O_1}$ to $\Lambda_{O_2}$. The joint prior over $(\Lambda_{O_1}, \Lambda_{O_2})$ may assign arbitrary marginals. An agent can therefore hold sharply differentiated credal profiles across the two orbit-regions while satisfying modal exchangeability throughout.
\end{proof}

The mechanism is offered as one route toward credal fine-graining generated by the interaction of modal structure and probabilistic symmetry on a Kripke frame.

\section{Examples}
\label{sec:examples}

\subsection{S4 fragmentation: epistemic regions with local learning}

\begin{example}[Two-orbit S4 frame with non-isomorphic regions]
\label{ex:S4}
Let $W = \{w_0\} \cup \{a_i : i \in \N\} \cup \{b_i : i \in \Z\}$. Define accessibility as follows. The base world $w_0$ is reflexive and accesses every other world. The $a$-region is a complete reflexive graph: $R(a_i, a_j)$ for all $i,j \in \N$. The $b$-region is a reflexive linear order indexed by $\Z$: $R(b_i, b_j)$ iff $j \geq i$. No cross edges hold between regions. The frame is reflexive and transitive, so it validates $\mathsf{S4}$.

The non-base accessible set is $\Acc(w_0) = \{a_i\} \cup \{b_i\}$. The two regions are \emph{not} isomorphic as relational structures: in the $a$-region the relation is symmetric on distinct worlds, whereas in the $b$-region it is a strict linear order. No automorphism of $\langle W, R\rangle$ fixing $w_0$ can map an $a$-world to a $b$-world, since the local relational profile differs. The stabilizer decomposes as
\[
G \cong \Sym(\{a_i\}_{i \in \N}) \times \Aut(\Z, \leq),
\]
where $\Aut(\Z, \leq)$ is the group of order-preserving bijections of $\Z$, i.e.\ the translations $\pi_k(b_i) = b_{i+k}$. This yields two orbits: $O_a = \{a_i\}$ and $O_b = \{b_i\}$. The $\Z$-indexing is essential: on $\N$ the order-preserving bijections reduce to the identity and the $b_i$ would fail to form a single orbit.

The $a$-orbit satisfies (Ext): $G|_{O_a} = \Sym(O_a)$. The $b$-orbit does not: translations do not realize arbitrary finite permutations (swapping $b_0$ and $b_1$ would reverse order). By Theorem~\ref{thm:general}, any modally exchangeable $P$ therefore exhibits two qualitatively different orbit behaviours. Within $O_a$, the valuations are conditionally i.i.d.\ with a rigid directing measure $\Lambda_{O_a}$. Within $O_b$, the valuations are identically distributed conditional on $\Xi$, but the conditional law is shift-stationary rather than i.i.d.; there is no single directing measure $\Lambda_{O_b}$ in the de Finetti sense.

The epistemic interpretation: the agent faces two structurally distinct regions. In the $a$-region, observations inform a single rigid parameter in the classical de Finetti manner. In the $b$-region, observations inform the shift-stationary conditional law but do not pool into a single directing parameter. This illustrates the two mechanisms by which modal structure shapes the representation: (i) the orbit decomposition distinguishes $O_a$ from $O_b$, and (ii) the internal structure of each orbit controls whether the representation is i.i.d.\ (under (Ext) and countable infinitude) or merely stationary.
\end{example}

\subsection{(Ext) failure revisited: the $b$-orbit as a stationary non-i.i.d.\ representation}

The $b$-region of Example~\ref{ex:S4} already exhibits (Ext) failure, and it is worth recording what the invariant latent structure delivers there. By Theorem~\ref{thm:general}(2), the valuations $\{V(b_i)\}_{i \in \Z}$ are conditionally identically distributed given $\Xi$. The conditional distribution is invariant under translations but not under arbitrary finite permutations, so Hewitt--Savage does not apply: the representation yields a shift-stationary process on $\Z$ rather than a mixture of i.i.d.\ draws. Stationarity without independence is a familiar object in probability theory, and its appearance here marks the precise boundary between the two representation regimes identified in Proposition~\ref{prop:ext-role}: conditional identical marginals (always, under $G$-invariance) versus conditional i.i.d.\ (requiring (Ext) and countable infinitude). Property (Ext) is thus not a technical decoration but the mathematical content of the transition.

\section{Remaining Proofs and Technical Notes}
\label{sec:proofs}

The proof of the general theorem (Theorem~\ref{thm:general}) was given in Section~\ref{sec:general}. Here we collect the key lemmas in explicit form and note the constructive content of the proofs.

\subsection{Sufficient conditions for the extension property}

\begin{proposition}[Sufficient conditions for (Ext)]
\label{prop:ext-sufficient}
Property (Ext) holds on an orbit $O$ in the following cases:

\begin{enumerate}[leftmargin=2em]
\item If $G|_O = \Sym(O)$ (the restricted action is the full symmetric group), then (Ext) holds trivially.

\item More generally, (Ext) holds whenever the relational structure induced on $O$ by $R$ is ultrahomogeneous in the Fra\"{i}ss\'{e} sense---that is, any partial isomorphism of finite subsets of $O$ extends to an automorphism in $G|_O$.

\item In particular, if the induced subframe on $O$ carries only reflexive accessibility and all worlds in $O$ have identical incidence patterns to worlds outside $O$, then $G|_O = \Sym(O)$ and (Ext) holds.
\end{enumerate}
\end{proposition}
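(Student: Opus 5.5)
The plan is to prove the three parts in the order 1, 3, 2. Part~3 is the concrete case, and Part~2 needs a caveat.

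\emph{Part 1.} Suppose $G|_O = \Sym(O)$. Take finite pairwise-disjoint $F_1,\dots,F_m$ and $F'_1,\dots,F'_m$ in $O$ with $|F_i| = |F'_i|$. Choose bijections $\sigma_i : F_i \to F'_i$ and glue them into an injection $\sigma$ from $\bigcup_i F_i$ onto $\bigcup_i F'_i$. Disjointness on both sides is what makes the glued map injective. Since $O$ is infinite, or equal to the finite union when finite, $\sigma$ extends to a bijection $\tau \in \Sym(O)$: the complements $O \setminus \bigcup F_i$ and $O \setminus \bigcup F'_i$ have the same cardinality, so we may biject them arbitrarily. By hypothesis some $\pi \in G$ restricts to $\tau$ on $O$, and then $\pi(F_i) = F'_i$ for every $i$.

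\emph{Part 3.} Read ``identical incidence patterns'' as follows: for each $u \notin O$, either $R(u,v)$ holds for all $v \in O$ or for none, and likewise for $R(v,u)$. Also read ``only reflexive accessibility'' as saying that $R \cap (O \times O)$ is the diagonal. Given any $\tau \in \Sym(O)$, define $\pi$ to be $\tau$ on $O$ and the identity off $O$. There are three kinds of pairs to check. Pairs inside $O$ are preserved because the diagonal is $\tau$-invariant. Pairs with exactly one coordinate in $O$ are preserved by the uniform-incidence assumption. Pairs outside $O$ are fixed pointwise. Since $w_0 \notin O$, we have $\pi(w_0) = w_0$, so $\pi \in G$, and therefore $G|_O = \Sym(O)$. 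Part~1 then gives (Ext). The same argument works verbatim if $R$ is total on $O$, as in the $a$-region of Example~\ref{ex:S4}: any relation on $O$ invariant under all of $\Sym(O)$ will do.

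\emph{Part 2 (the main obstacle).} This part cannot hold as literally stated, and the proof must first repair the hypothesis. (Ext) demands that \emph{arbitrary} equicardinal set-wise rearrangements be realized. Ultrahomogeneity only extends \emph{partial isomorphisms}. Take $(\mathbb{Q},<)$, which is ultrahomogeneous: mapping $F_1=\{0\},F_2=\{1\}$ to $F'_1=\{1\},F'_2=\{0\}$ reverses order, so (Ext) fails, exactly as in Example~\ref{ex:ext-failure-preview}. There is a second gap. An automorphism of the induced structure on $O$ need not extend to an element of $G$ on all of $W$.

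So I would prove Part~2 in the following corrected form. Suppose the induced structure on $O$ is ultrahomogeneous and, in addition, $R|_O$ is invariant under every bijection between finite subsets of $O$; equivalently, $R|_O$ is the empty, diagonal, or total relation (possibly with loops). Suppose also that $\Aut(O, R|_O)$ extends into $G$, for instance via the uniform-incidence condition of Part~3. Under the first condition, every finite bijection $\sigma$ as in Part~1 is a partial isomorphism. Ultrahomogeneity then extends $\sigma$ to an automorphism of $O$, and the extension condition lifts it to $G$. In fact these hypotheses already force $G|_O = \Sym(O)$, so Part~2 collapses into Parts~1 and~3. I would state this explicitly and remark that genuinely ordered ultrahomogeneous orbits, such as $\mathbb{Q}$ or the $\Z$-chain, yield only the weaker pointwise-order-respecting homogeneity and hence only layer~1 of Proposition~\ref{prop:ext-role}.
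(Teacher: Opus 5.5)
Your Parts~1 and~3 are the paper's arguments, written out in full. The paper calls Case~1 immediate. For Case~3 it says only that any permutation of $O$ preserves all accessibility relations, which is exactly your three-way check for $\tau$ extended by the identity off $O$. Your remark that a total relation on $O$ works equally well is in fact what the paper needs for the $a$-region of Example~\ref{ex:S4}.

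You diverge on Part~2, and there you are right and the paper is not. The paper's whole argument is that ultrahomogeneity means every partial isomorphism extends to an automorphism. But (Ext) requires extending arbitrary equicardinal set-wise rearrangements, and these are partial isomorphisms only when $R|_O$ is trivial. Your counterexample fits inside the paper's own setting. Take $W=\{w_0\}\cup\mathbb{Q}$, with $w_0$ reflexive and accessing every world, and $R$ restricted to $\mathbb{Q}$ equal to $\le$. This is an $\mathsf{S4}$ frame, $\mathbb{Q}$ is a single countably infinite orbit, and $G|_O=\Aut(\mathbb{Q},\le)$. Every finite partial isomorphism extends inside $G|_O$, so even the paper's gloss is satisfied, yet swapping $0$ and $1$ is impossible. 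The same example shows that your second objection (lifting to $G$) is already covered by the paper's phrase ``extends to an automorphism in $G|_O$''. The first objection is the decisive one.

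Two small corrections. First, the $\Z$-chain is not ultrahomogeneous. The order-preserving partial map $0\mapsto 0$, $2\mapsto 1$ has no extension, because $\Aut(\Z,\le)$ consists only of translations. So Example~\ref{ex:ext-failure-preview} shows (Ext) failing, but it is not a counterexample to Part~2; $\mathbb{Q}$ is. Second, in Part~1, when $O$ is finite the union $\bigcup_i F_i$ need not be all of $O$. What you need, and what holds, is that both complements have $|O|-\sum_i |F_i|$ elements.

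Finally, your collapse remark can be made unconditional. Applying (Ext) to singletons makes $G|_O$ $2$-transitive, and $R|_O$ is $G|_O$-invariant. Hence any orbit satisfying (Ext) has $R|_O$ empty, diagonal, off-diagonal, or full. For such relations, Part~2's hypothesis (every finite partial bijection extends inside $G|_O$) is just (Ext) restated. So the ``more generally'' of Part~2 adds nothing.
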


\begin{proof}
Case 1 is immediate. Case 2 follows from Fra\"{i}ss\'{e}'s characterization: ultrahomogeneity means every partial isomorphism extends to a full automorphism. Case 3: if internal structure is trivial and external incidence is uniform across $O$, any permutation of $O$ preserves all accessibility relations, so $G|_O = \Sym(O)$.
\end{proof}

\subsection{Key lemmas}

\begin{lemma}[Ergodic decomposition]
\label{lem:disintegration}
Let $P$ be modally exchangeable. There exists a $\FG$-measurable random variable $\Xi$ and a regular conditional probability $P(\cdot \mid \Xi)$ such that $P = \int P(\cdot \mid \Xi = \xi) \, P_\Xi(d\xi)$, with each fiber $G$-ergodic.
\end{lemma}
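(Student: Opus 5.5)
The plan is to take $\FG$ to be the $\sigma$-field of $G$-invariant events, $\FG := \{A \in \mathcal{F} : \pi^{-1}(A) = A \text{ for all } \pi \in G\}$, and to build $\Xi$ as a random variable generating $\FG$ modulo $P$-null sets.

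The first step is to fix a concrete version of $\Xi$. Because $(\Omega,\mathcal{F})$ is standard Borel, the regular conditional probability $P(\cdot \mid \FG)$ exists. I would set $\Xi(V) := P(\cdot \mid \FG)(V)$, viewed as a measurable map into the standard Borel space of probability measures on $\Omega$. Equivalently, one can evaluate it on a countable generating algebra of cylinder sets: there are only countably many cylinder sets, since $W$ is countable and $L$ is finite. The resulting map is $\FG$-measurable up to null sets, and I would modify it on a null set to make it genuinely $\FG$-measurable. The disintegration $P = \int P(\cdot \mid \Xi=\xi)\,P_\Xi(d\xi)$ is then just the tower property applied to cylinder sets, extended by a $\pi$–$\lambda$ argument.

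The second step is invariance of the fibers. Here I would use that $G \le \Sym(W)$ acts by measurable bijections and that $P$ is $G$-invariant. For a fixed $\pi$ and a cylinder set $C$, both $P(\pi^{-1}C \mid \FG)$ and $P(C\mid\FG)$ serve as versions of the conditional expectation given $\FG$. This holds because for invariant $A$ we have $P(\pi^{-1}C \cap A) = P(\pi^{-1}(C\cap A)) = P(C\cap A)$. The difficulty is that $G$ may be uncountable; for example, $\Sym(\N)$ appears in Example~\ref{ex:S4}. So I cannot intersect the exceptional null sets over all of $G$. Instead I would use that $G$ is a closed subgroup of $\Sym(W)$, hence a Polish group, and therefore has a countable dense subgroup $G_0$. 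Invariance under $G_0$ on cylinder sets then passes to all of $G$, because a cylinder set depends on only finitely many coordinates and every $\pi \in G$ agrees on those coordinates with some element of $G_0$. This gives $G$-invariance of $P_\xi$ for $P_\Xi$-a.e.\ $\xi$.

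The third step, ergodicity of almost every fiber, is the one I expect to be the main obstacle. The standard route, due to Varadarajan and Farrell, requires $\FG$ to be countably generated modulo null sets, and invariant $\sigma$-fields of non-compact group actions are typically not countably generated. My first attempt would be to replace $\FG$ by the $\sigma$-field generated by the countably many functions $V \mapsto P(C\mid\FG)(V)$, with $C$ ranging over cylinder sets. I would show that this $\sigma$-field coincides with $\FG$ modulo $P$-null sets, and that for a.e.\ $\xi$ every invariant $A$ satisfies $P_\xi(A) = \mathbf 1_A$ evaluated on the fiber. The key identity is $P_\xi(A)\in\{0,1\}$, which would follow from $P(A \mid \FG) = \mathbf 1_A$ a.s.\ for invariant $A$, together with the fact that the atoms of the countably generated version are the fibers $\{\Xi = \xi\}$. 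The delicate point is the quantifier order, namely handling all $A \in \FG$ simultaneously for almost every $\xi$. If this direct argument stalls, I would instead cite the ergodic decomposition theorem for measurable actions of Polish groups, due to Varadarajan, or its version for countable groups applied to $G_0$ after checking that $G_0$-invariant and $G$-invariant measures coincide on this space. That route delivers the $G$-ergodic fibers, with $\Xi$ identified as the ergodic component.
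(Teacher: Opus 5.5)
Your Steps 1 and 2 are sound, and Step 1 is all that the paper's proof contains. The paper cites Fremlin's disintegration theorem over $\FG$ and says nothing about ergodicity of the fibers, so you are right that the content lies in Step 3. Your density argument in Step 2 is also a correct and necessary ingredient: $\pi^{-1}C$ depends only on $\pi^{-1}$ restricted to the finitely many coordinates of $C$, and some $\pi_0\in G_0$ matches it there.

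Step 3, however, does not prove the statement. The direct route stops exactly where the work is. Agreement of your countably generated $\mathcal G$ with $\FG$ modulo $P$-null sets gives, for each invariant $A$, $P_\xi(A)=\mathbf{1}_A$ off a null set that depends on $A$. Nothing you propose controls the union of these uncountably many null sets. The first fallback does not apply either: Varadarajan's ergodic decomposition is proved for locally compact second countable groups, and $\Sym(\N)$ is not locally compact.

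The $G_0$ fallback is correct mathematics but proves a different statement. The countable-group (Farrell--Varadarajan) decomposition for $G_0$, combined with your observation that $G_0$- and $G$-invariant measures coincide (hence so do their extreme points), does yield extremal and therefore $G$-ergodic fibers. But the component map is measurable for the $\sigma$-field $\mathcal F^{G_0}$ of $G_0$-invariant sets. That $\sigma$-field can be strictly larger than your strictly-$G$-invariant $\FG$, even modulo null sets.

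Here is a concrete case. Let $W=\{w_0\}\cup\N$ and $R=W\times W$, with one atom, so that $G$ acts on $\N$ as $\Sym(\N)$. Let $V(w_0)=0$, and let $\{V(n)\}_{n\in\N}$ be i.i.d.\ Bernoulli$(p)$ given $p\sim\mathrm{Unif}(0,1)$.
\begin{itemize}
\item A strictly invariant set is a union of $G$-orbits. These orbits are classified by $V(w_0)$ and the cardinalities of $\{n:V(n)=1\}$ and $\{n:V(n)=0\}$.
\item $P$ is carried by a single orbit, so your $\FG$ is $P$-trivial, even though it is countably generated. So the obstruction is not just countable generation.
\item Hence every $\FG$-measurable $\Xi$ is a.s.\ constant, whereas the ergodic component (the limiting frequency) is not.
\item Your direct route would return the single fiber $P$. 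This fiber is $0$--$1$ on strictly invariant sets, yet it is a nontrivial mixture of invariant measures and is not conditionally i.i.d.
\end{itemize}
So with your $\FG$ the lemma holds at best in a sense too weak for Theorem~\ref{thm:general}(3), and it fails outright if ``ergodic'' means extremal.

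The repair is to let $\FG$ be the $\sigma$-field of sets $A$ with $P(\pi^{-1}A\,\triangle\,A)=0$ for every $\pi\in G$. This matches the paper's own almost-sure gloss of $\FG$-measurability in Theorem~\ref{thm:general}(5). This $\sigma$-field agrees with $\mathcal F^{G_0}$ modulo $P$-null sets:
\begin{itemize}
\item For $A\in\mathcal F^{G_0}$, the map $\pi\mapsto P(\pi^{-1}A\,\triangle\,A)$ is continuous on $G$ (approximate $A$ by cylinders and use invariance) and vanishes on the dense subgroup $G_0$, so it vanishes on all of $G$.
\item Conversely, an almost-invariant $A$ can be replaced by the strictly $G_0$-invariant set $\bigcap_{g\in G_0}g^{-1}A$, which differs from $A$ by a null set.
\end{itemize}
With that definition, your Step 2 plus the countable-group ergodic decomposition for $G_0$ is a complete proof, and the direct route can be dropped.
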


\begin{proof}
Disintegration over $\FG$ on the standard Borel space $(\Omega, \mathcal{F})$ exists by~\citep[Theorem 452I]{Fremlin2006}, requiring only dependent choice (ZF + DC).
\end{proof}

\begin{lemma}[Identical distribution within orbits]
\label{lem:identical}
For any $v, v'$ in the same $G$-orbit, $\Law(V(v) \mid \Xi) = \Law(V(v') \mid \Xi)$ holds $P$-a.s.
\end{lemma}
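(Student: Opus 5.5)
The plan is to reduce the claim to a statement about conditional expectations of bounded functions of $V(v)$, and then use the $G$-invariance of both $P$ and $\Xi$. Since $\{0,1\}^L$ is finite, it suffices to show, for each fixed $x \in \{0,1\}^L$, that
\[
P(V(v) = x \mid \FG) = P(V(v') = x \mid \FG) \quad P\text{-a.s.}
\]
We work with $\FG$ rather than $\Xi$ directly. The variable $\Xi$ is $\FG$-measurable, and Lemma~\ref{lem:disintegration} takes the disintegration over $\FG$, so $\sigma(\Xi)$ can be taken to generate $\FG$ mod null sets. If $\sigma(\Xi)$ is strictly coarser, we apply the tower property with respect to $\sigma(\Xi)$ at the end. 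A finite union of null sets over $x$ then gives equality of the conditional laws almost surely.

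Fix $\pi \in G$ with $\pi(v') = v$; such a $\pi$ exists because $v$ and $v'$ lie in the same orbit. Write $T_\pi V := \pi \cdot V$, so that $(\pi\cdot V)(v) = V(\pi^{-1}(v)) = V(v')$. Thus $\mathbf{1}\{V(v')=x\} = \mathbf{1}\{V(v)=x\}\circ T_\pi$.

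The key step is the standard identity
\[
E[f\circ T_\pi \mid \FG] = E[f \mid \FG] \quad P\text{-a.s.}
\]
for bounded measurable $f$, whenever $P$ is $T_\pi$-invariant. To prove it, take any $A \in \FG$. By definition $T_\pi^{-1}A = A$ up to the convention used for $\FG$, and invariance of $P$ gives $\int_A f\circ T_\pi\,dP = \int_{T_\pi^{-1}A} f\circ T_\pi\,dP = \int_A f\,dP$. Since $E[f\mid\FG]$ is $\FG$-measurable and has the same integrals over every $A \in \FG$, it is a version of $E[f\circ T_\pi\mid \FG]$. Applying this with $f = \mathbf{1}\{V(v)=x\}$ yields the displayed equality.

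The main point needing care is the definition of $\FG$. If $\FG$ consists of sets that are strictly invariant under every $\pi \in G$, the argument above is exact. If it consists of sets invariant only mod $P$-null sets, the same computation goes through with $P(A \,\triangle\, T_\pi^{-1}A)=0$. No countability issue arises, because only a single $\pi$ is used. A second subtlety is that each fiber $P(\cdot\mid\Xi=\xi)$ may itself only be $\pi$-invariant for $P_\Xi$-a.e.\ $\xi$. The approach above deliberately avoids fiberwise reasoning and works with conditional expectations, so this measure-theoretic nuisance is sidestepped.
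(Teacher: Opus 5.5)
Your proposal is correct and follows the same route as the paper: choose $\pi \in G$ linking $v$ and $v'$, rewrite one coordinate as the other composed with the action of $\pi$, and use $G$-invariance of $P$ together with invariance of $\Xi$ to remove $\pi$ inside the conditional probability. The paper asserts the identity $E[f\circ T_\pi \mid \mathcal{F}^G] = E[f \mid \mathcal{F}^G]$ in one line, whereas you verify it by integrating over invariant sets and then pass to $\sigma(\Xi)$ via the tower property.
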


\begin{proof}
There exists $\pi \in G$ with $\pi(v) = v'$. By $G$-invariance of $P$ and $\pi$-invariance of $\Xi$:
$P(V(v) \in B \mid \Xi) = P((\pi \cdot V)(v') \in B \mid \Xi) = P(V(v') \in B \mid \Xi)$.
\end{proof}

\begin{lemma}[Exchangeability within orbits]
\label{lem:exch}
Fix an orbit $O$ and assume $G|_O$ has property \textup{(Ext)}. Then for $P$-a.e.\ $\Xi$, the conditional distribution of $\{V(w)\}_{w \in O}$ is exchangeable.
\end{lemma}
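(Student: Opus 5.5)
To prove Lemma~\ref{lem:exch}, I will show that for every finite permutation $\sigma$ of $O$ and every cylinder event depending on finitely many coordinates in $O$, the regular conditional probability $P(\cdot \mid \Xi)$ assigns the same value to the event and to its $\sigma$-image. Exchangeability of the conditional law then follows by a countability argument.

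\textbf{Step 1 (realizing finite permutations).} Fix a finite set $F = \{u_1,\dots,u_n\} \subseteq O$ and a permutation $\sigma$ of $F$. Apply (Ext) with the singletons $F_i = \{u_i\}$ and $F'_i = \{\sigma(u_i)\}$; these are pairwise disjoint and have matching cardinalities. This gives $\pi \in G$ with $\pi(u_i) = \sigma(u_i)$ for all $i$. Only the restriction of $\pi$ to $F$ matters for events depending on coordinates in $F$. So every finite permutation of $O$ is realized on $F$ by an element of $G$, even though $\pi$ may move other worlds.

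\textbf{Step 2 (transfer to the conditional law).} Let $A = \{V(u_i) \in B_i,\ i \le n\}$. Using $(\pi\cdot V)(w) = V(\pi^{-1}(w))$, the preimage $\pi^{-1}(A)$ is the event $\{V(\sigma(u_i)) \in B_i,\ i \le n\}$ (up to the direction of $\sigma$, which is harmless). For any $C \in \FG$ we have $\pi^{-1}(C) = C$, so $G$-invariance of $P$ gives
\[
P(\pi^{-1}(A) \cap C) = P(\pi^{-1}(A \cap C)) = P(A \cap C).
\]
Hence $P(\pi^{-1}A \mid \FG) = P(A \mid \FG)$ almost surely. Since $\Xi$ generates (or is measurable with respect to) $\FG$, this is the statement $P(\pi^{-1}A \mid \Xi) = P(A \mid \Xi)$ a.s., exactly as in the proof of Lemma~\ref{lem:identical}.

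\textbf{Step 3 (null-set bookkeeping).} This is the main obstacle. Step~2 yields equality outside a null set depending on $(F,\sigma,B_1,\dots,B_n)$, while exchangeability must hold for a.e.\ $\Xi$ simultaneously. Because $W$ is countable and $\{0,1\}^L$ is finite, there are only countably many triples consisting of a finite $F$, a permutation $\sigma$, and a cylinder specification. Taking the union of the exceptional sets still gives a null set. Off this set, the regular conditional probability $P(\cdot \mid \Xi=\xi)$ agrees with its $\sigma$-permutation on all finite cylinders. These cylinders form a $\pi$-system generating the $\sigma$-field on $(\{0,1\}^L)^O$, so by the $\pi$--$\lambda$ theorem the two measures agree. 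The conditional distribution of $\{V(w)\}_{w\in O}$ is therefore invariant under every finite permutation, that is, exchangeable.

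One subtlety to flag is whether $\Xi$ fully generates $\FG$. If it does not, I would state Step~2 with conditioning on $\FG$ itself and note that $P(\cdot\mid\Xi)$ is its version under the disintegration of Lemma~\ref{lem:disintegration}.
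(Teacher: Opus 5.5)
Your proof is correct, and its skeleton matches the paper's: use (Ext) to realize finite permutations of $O$ by elements of $G$, then transfer invariance to the conditional law given $\Xi$. The execution differs in two places, and both differences favour your version.

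\textbf{Realizing permutations.} The paper asserts that (Ext) supplies $\pi\in G$ with $\pi|_O=\sigma$. But (Ext) only constrains $\pi$ on finitely many finite sets, so it does not by itself force $\pi$ to fix the infinitely many points of $O$ outside the support of $\sigma$. Your Step~1 asks only for agreement on a finite $F$. That suffices, because exchangeability is tested on finite-dimensional cylinders. Letting $F$ range over all finite sets, in particular those containing both the support of $\sigma$ and the cylinder's coordinates, covers every case.

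\textbf{Invariance of the fibers.} The paper reads off invariance of each fiber from the $G$-ergodicity of $P(\cdot\mid\Xi)$ asserted in Lemma~\ref{lem:disintegration}. This presupposes that every fiber is invariant under all of $G$ simultaneously, which for uncountable $G$ itself needs an argument. You instead derive exactly the invariance you need from $G$-invariance of $P$ and the invariance of sets in $\FG$. You then handle simultaneity through the countability of the relevant (finite set, permutation, cylinder) triples plus the $\pi$--$\lambda$ theorem. Your route is therefore more elementary and self-contained, and ergodicity of the fibers plays no role in it.

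The subtlety you flag at the end is harmless. Since $\Xi$ is $\FG$-measurable, every set $C$ in the $\sigma$-field generated by $\Xi$ satisfies $\pi^{-1}(C)=C$. Your Step~2 computation therefore goes through verbatim with that $\sigma$-field in place of $\FG$, whether or not $\Xi$ generates $\FG$.
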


\begin{proof}
Property (Ext) ensures that for any finite permutation $\sigma$ of $O$, there exists $\pi \in G$ with $\pi|_O = \sigma$. By $G$-ergodicity of $P(\cdot \mid \Xi)$, the conditional distribution is invariant under all such permutations.
\end{proof}

\subsection{Constructive content and precision on disintegration}

All proofs use only ZF + DC (Zermelo--Fraenkel set theory with dependent choice). The critical step is disintegration on standard Borel spaces, which requires dependent choice. Since $W$ is countable and $L$ is finite, $(\Omega, \mathcal{F})$ is a standard Borel space without any appeal to choice. The disintegration theorem applies, yielding a regular conditional probability $P(\cdot \mid \FG)$ by~\citep[Theorem 452I]{Fremlin2006}.

Computability of the orbit structure depends on effective presentations of the frame and access to its automorphism group; we do not pursue general computability claims here. Under suitable effective presentations, the directing measures can be shown to be computable in the sense of \citet{FreerRoy2009}.

\section{Conclusion}
\label{sec:conclusion}

Modal exchangeability---$G$-invariance of a probability measure on a Kripke frame, centered at a designated world---shapes credal architecture via the orbit geometry of the centered symmetry group. Under point-homogeneous $\mathsf{S5}$ with (Ext) and countable infinitude, one recovers a single de Finetti parameter and globally pooled learning. Under $\mathsf{S4}$, the representation may fragment into multiple orbits; those in $\Orbit^\infty$ admit local de Finetti theorems with rigid directing measures, while other orbits receive only weaker invariant structure.

The philosophical suggestion: modal accessibility structure constrains how rational uncertainty is organized via the orbit decomposition---how many independent parameters credence carries on the orbits that support them, which worlds share a parameter, and where learning pools. The orbit decomposition also supports structural credal fine-graining indexed to orbit regions, a mechanism distinct from hyperintensionality in the strict sense of distinguishing coextensive propositions.

Several extensions remain open: exchangeability-preserving update rules in dynamic modal logic~\citep{vanBenthem2011}, approximate or partial exchangeability under subgroups of the stabilizer, and the question of which frame classes beyond the standard hierarchy support a canonical class-level representation. The assumption of countable $W$ and finite $L$ ensures standard Borel structure; uncountable extensions would require stronger choice principles. The framework is offered as a contribution to the ongoing conversation between probabilistic symmetry theory and modal epistemology---a conversation in which the structure of possibility and the structure of rational uncertainty turn out to be more tightly coupled than either tradition has recognised on its own.


\appendix

\section{Comparison Table: Probabilistic Symmetry Frameworks}
\label{app:representation}

The following table provides a concise comparison of the major probabilistic symmetry frameworks and their relation to the present paper.

\begin{center}
\footnotesize
\begin{tabular}{@{}p{2.0cm}p{1.9cm}p{2.3cm}p{2.3cm}p{3.3cm}@{}}
\toprule
\textbf{Framework} & \textbf{Random object} & \textbf{Symmetry group} & \textbf{Repres.\ form} & \textbf{Relation to this paper} \\
\midrule
de Finetti / Hewitt--Savage & Sequence $(X_n)$ & $\Sym(\N)$ & Mixture of i.i.d. & Recovered as homogeneous $\mathsf{S5}$ case \\[4pt]
Aldous--Hoover & Array $(X_{ij})$ & $\Sym(\N)^2$ & Function of i.i.d.\ uniforms & Broader background; different object and group \\[4pt]
Austin--Panchenko & Hierarchically indexed variables & Tree automorphisms & Hierarchical conditionally i.i.d. & Closer in spirit; hierarchy endogenous here \\[4pt]
Crane--Towsner & Random relational structure & $\Aut(\mathcal{M})$ & Definable-closure decomposition & Nearest abstract analogue; modal specialization here \\[4pt]
\textbf{Present paper} & Valuation on Kripke frame & $\Stab_{w_0} \leq \Aut\langle W,R\rangle$ & Orbit-wise mixture, rigid $\Lambda_O$ & Modal interpretation, philosophical content \\
\bottomrule
\end{tabular}
\end{center}

\bibliographystyle{plainnat}
\bibliography{references}

@article{CraneTowsner2018,
  author    = {H. Crane and H. Towsner},
  title     = {Relatively exchangeable structures},
  journal   = {Journal of Symbolic Logic},
  volume    = {83},
  number    = {2},
  pages     = {416--442},
  year      = {2018}
}

@article{deFinetti1937,
  author    = {B. de Finetti},
  title     = {La pr\'{e}vision: ses lois logiques, ses sources subjectives},
  journal   = {Annales de l'Institut Henri Poincar\'{e}},
  volume    = {7},
  pages     = {1--68},
  year      = {1937}
}

@inproceedings{FreerRoy2009,
  author    = {C. E. Freer and D. M. Roy},
  title     = {Computable exchangeable sequences have computable de {F}inetti measures},
  booktitle = {CiE 2009},
  series    = {LNCS},
  volume    = {5635},
  pages     = {218--231},
  publisher = {Springer},
  year      = {2009}
}

@book{Fremlin2006,
  author    = {D. H. Fremlin},
  title     = {Measure Theory, Volume 4: Topological Measure Spaces},
  publisher = {Torres Fremlin},
  year      = {2006}
}

@article{Hewitt1955,
  author    = {E. Hewitt and L. J. Savage},
  title     = {Symmetric measures on {C}artesian products},
  journal   = {Transactions of the American Mathematical Society},
  volume    = {80},
  pages     = {470--501},
  year      = {1955}
}

@incollection{Berto2022,
  author    = {F. Berto and D. Nolan},
  title     = {Hyperintensionality},
  booktitle = {The {Stanford} Encyclopedia of Philosophy (Summer 2021 Edition)},
  editor    = {E. N. Zalta},
  publisher = {Metaphysics Research Lab, Stanford University},
  year      = {2021}
}

@article{Fraisse1954,
  author    = {R. Fra\"{\i}ss\'{e}},
  title     = {Sur l'extension aux relations de quelques propri\'{e}t\'{e}s des ordres},
  journal   = {Annales Scientifiques de l'\'{E}cole Normale Sup\'{e}rieure},
  volume    = {71},
  pages     = {363--388},
  year      = {1954}
}

@book{Hodges1993,
  author    = {W. Hodges},
  title     = {Model Theory},
  publisher = {Cambridge University Press},
  year      = {1993}
}

@book{TentZiegler2012,
  author    = {K. Tent and M. Ziegler},
  title     = {A Course in Model Theory},
  publisher = {Cambridge University Press},
  year      = {2012}
}

@book{vanBenthem2010,
  author    = {J. van Benthem},
  title     = {Modal Logic for Open Minds},
  publisher = {CSLI Publications},
  year      = {2010}
}

@book{vanBenthem2011,
  author    = {J. van Benthem},
  title     = {Logical Dynamics of Information and Interaction},
  publisher = {Cambridge University Press},
  year      = {2011}
}

@article{Diaconis1977,
  author    = {P. Diaconis},
  title     = {Finite forms of de {F}inetti's theorem on exchangeability},
  journal   = {Synthese},
  volume    = {36},
  number    = {2},
  pages     = {271--281},
  year      = {1977}
}

@article{SuppesZanotti1981,
  author    = {P. Suppes and M. Zanotti},
  title     = {When are probabilistic explanations possible?},
  journal   = {Synthese},
  volume    = {48},
  number    = {2},
  pages     = {191--199},
  year      = {1981}
}

@article{Bueno2009,
  author    = {O. Bueno and S. A. Shalkowski},
  title     = {Modalism and Logical Pluralism},
  journal   = {Mind},
  volume    = {118},
  number    = {470},
  pages     = {295--321},
  year      = {2009}
}

\end{document}